 \DeclareMathOperator{\perm}{Sym}
 \DeclareMathOperator{\soc}{soc}
 \DeclareMathOperator{\frat}{Frat}
\DeclareMathOperator{\core}{Core}
\DeclareMathOperator{\Ee}{E}
\DeclareMathOperator{\End}{End}
\newtheorem{thm}{Theorem}
\newtheorem{cor}[thm]{Corollary}
 \newtheorem{lemma}[thm]{Lemma}
\newtheorem{prop}[thm]{Proposition} 
 \newtheorem{defn}[thm]{Definition}
\numberwithin{equation}{section}
\renewcommand{\footnote}{\endnote}
\newcommand{\ignore}[1]{}\makeglossary
\begin{document}
	\bibliographystyle{amsplain}
	\subjclass{20P05}
	\keywords{groups generation; waiting time; Sylow subgroups; permutations groups}
	\title[The expected number of elements to generate a group]{The expected number of elements to generate a finite  group
	with $d$-generated Sylow subgroups}

\author{Andrea Lucchini}
\address{
	Andrea Lucchini\\ Universit\`a degli Studi di Padova\\  Dipartimento di Matematica \lq\lq Tullio Levi-Civita\rq\rq \\email: lucchini@math.unipd.it}
\author{Mariapia Moscatiello}
\address{
	Mariapia Moscatiello\\ Universit\`a degli Studi di Padova\\  Dipartimento di Matematica \lq\lq Tullio Levi-Civita\rq\rq\\email: mariapia.moscatiello@gmail.com}	
		
	\begin{abstract}Given a finite group $G,$ let $e(G)$ be expected number of elements of  $G$ which have to be drawn at random, with replacement,
		before a set of generators is found.  
		If all the Sylow subgroups of $G$ can be generated by $d$ elements, then $e(G)\leq d+\kappa$ with $\kappa \sim 2.75239495.$ The number $\kappa$ is explicitly described in terms of
	the Riemann zeta function and is best possible. 	If $G$ is a permutation group of degree $n,$ then either $G=\perm(3)$ and $e(G)=2.9$ or $e(G)\leq  \lfloor n/2\rfloor+\kappa^*$ with $\kappa^* \sim 1.606695.$ These results improved the weaker ones obtained in \cite{alex}.
	\end{abstract}
	\maketitle
	
	\section{Introduction}
	
	In 1989, R. Guralnick \cite{rg} and the first author \cite{al} independently proved
	that if all the Sylow subgroups of a finite group $G$ can be generated by $d$ elements, then the group $G$ itself can be generated by $d+1$ elements.
	A probabilistic version of this result was obtained in \cite{alex}.
	Let $G$ be a nontrivial finite group and let $x=(x_n)_{n\in\mathbb N}$ be a sequence of independent, uniformly distributed $G$-valued random variables.
	We may define a random variable $\tau_G$ by
	$\tau_G=\min \{n \geq 1 \mid \langle x_1,\dots,x_n \rangle = G\}.$
	We denote by $e(G)$ the expectation $\Ee(\tau_G)$ of this random variable:
 $e(G)$ is the expected number of elements of $G$ which have to be drawn at random, with replacement,
	before a set of generators is found. In \cite{alex} it was proved that
if all the Sylow subgroups of $G$ can be generated by $d$ elements, then $e(G)\leq d+\eta$ with $\eta \sim 2.875065.$  This bound is not too far from  being best possible. Indeed in \cite{pom}, Pomerance  proved that if $\Omega_d$ is the set of all the $d$-generated finite abelian groups, then 
 $$\sup_{G\in \Omega_d} e(G)=d+\sigma, \text { where } \sigma \sim 2.11846.$$ However the bound $e(G)\leq d+\eta$ is approximative, and one could be interest in finding a best possible estimation for $e(G).$ We give an exhaustive answer to this question, proving the following result. 
\begin{thm}\label{tuno}
Let $G$ be a finite group. If all the Sylow subgroups of $G$ can be generated by $d$ elements, then $e(G)\leq d+\kappa$ with $\kappa \sim 2.75239495.$ The number $\kappa$ is explicitly described in terms of
the Riemann zeta function and is best possible.
\end{thm}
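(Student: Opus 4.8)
The starting point is the identity $e(G)=\sum_{n\ge 0}\bigl(1-P_G(n)\bigr)$, where $P_G(n)=\sum_{H\le G}\mu_G(H)\,|G:H|^{-n}$ is the probability that $n$ uniformly random elements generate $G$. Since Frattini elements are irrelevant to generation, $e(G)=e(G/\Phi(G))$ and the Sylow hypothesis passes to the quotient, so I would first replace $G$ by $G/\Phi(G)$. The plan is then to analyse $\sum_n(1-P_G(n))$ through the factorisation of the probabilistic zeta function $P_G(s)$ over the crowns of $G$ (Detomi--Lucchini): each non-Frattini chief factor contributes, and a crown of an irreducible $\mathbb{F}_p$-module $V$ (with $q=|\End_G(V)|$) of multiplicity $r$ contributes a factor $\prod_{i=0}^{r-1}\bigl(1-c_i|V|^{-s}\bigr)$, whose coefficients $c_i$ are governed by $q^i$ and $|\h(\,\cdot\,,V)|$, with an analogous shape for crowns with non-abelian socle.

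The decisive point is that the hypothesis constrains these crowns: since every Sylow subgroup is $d$-generated, for each prime $p$ the multiplicity-weighted sum of the $p$-ranks of the crowns is at most $d$. I would therefore recast $\sup_G\bigl(e(G)-d\bigr)$ as an optimisation over admissible crown families, each crown spending budget at every prime dividing its order and contributing a computable amount to $\sum_n(1-P_G(n))$, and then identify the extremal family. Here the asymmetry between $p=2$ and odd $p$ is crucial: over $\mathbb{F}_2$ the only module is trivial, so the cheapest factors at $p=2$ are those of the central crown $C_2^{\,d}$, namely $\prod_{i=0}^{d-1}(1-2^{\,i-n})$; whereas for odd $p$ one may use a nontrivial scalar action (as in $C_p^{\,d}\rtimes C_r$ with $r\mid p-1$, $r>1$, the generalisation of $\perm(3)$), which costs the same $p$-rank $d$ but shifts the coefficients up by one power of $p$, giving $\prod_{i=1}^{d}(1-p^{\,i-n})$. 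Each shifted factor is smaller, so $1-P_G(n)$ is larger, and this is precisely the gain over Pomerance's abelian bound $\sigma$.

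Carrying out the computation for $G=C_2^{\,d}\times\prod_{p\text{ odd}}\bigl(C_p^{\,d}\rtimes C_{r_p}\bigr)$ and letting $d$ and the number of primes tend to infinity, one finds $P_G(n)=0$ for $n\le d+1$ (the shift forces the extra vanishing at $n=d+1$ because $\prod_{p\text{ odd}}(1-p^{-1})=0$), while for $n=d+m$ with $m\ge 2$,
\[
P_G(d+m)\longrightarrow \frac{1}{1-2^{-m}}\prod_{j\ge m}\zeta(j)^{-1}.
\]
This yields the closed form
\[
\kappa \;=\; 2+\sum_{m=2}^{\infty}\Bigl(1-\tfrac{1}{1-2^{-m}}\textstyle\prod_{j\ge m}\zeta(j)^{-1}\Bigr)\;\sim\;2.75239495,
\]
and exhibiting this family shows $\kappa$ is best possible (approached, not attained). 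Realisability of the Sylow bound is delicate but genuine: the acting groups $C_{r_p}$ can be chosen with pairwise distinct prime orders $r_p\mid p-1$ lying outside the set of crown primes, so that every Sylow subgroup still has rank at most $d$ and the bound is respected exactly as $d\to\infty$.

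The main obstacle is the upper bound $e(G)\le d+\kappa$ for an \emph{arbitrary} $G$: one must show that for every admissible $G$ one has $P_G(n)\ge P_{\mathrm{ext}}(n)$, the value of the extremal family at the same budget, whence $\sum_n(1-P_G(n))\le\sum_n(1-P_{\mathrm{ext}}(n))=d+\kappa$. I would prove this crown by crown, checking that the factor contributed by any crown is at least the factor contributed per equal unit of Sylow budget by the trivial $\mathbb{F}_2$-crown or by the scalar odd-$p$ crowns. The hard cases are the non-abelian ones: for a crown with non-abelian socle $S^{\,r}$ the relevant decay base is the minimal index of a proper subgroup of $S$, which is at least $5$, so using uniform estimates for $1-P_S(n)$ across the finite simple groups (via the classification) such crowns decay too fast in $n$ to compete with the $p=2,3$ abelian crowns while consuming at least as much budget, and in addition their forced initial terms $n<d(S^{\,r})$ are far outweighed by the budget they cost. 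Assembling these per-crown comparisons reduces the upper bound to the same extremal optimisation, which completes the proof once that optimisation is shown to be solved by the shifted configuration above.
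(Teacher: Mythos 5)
Your overall architecture coincides with the paper's: reduce to an extremal supersoluble configuration by comparing, prime by prime, the factors of the Gasch\"utz factorization of $P_G$ under the budget $\alpha_p(G)\le d_p(G)\le d$, exploit the asymmetry that for odd $p$ a nontrivial scalar action shifts the factors from $1-p^{i-1}/p^k$ to $1-p^{i}/p^k$ at no extra Sylow cost (this is exactly Lemma \ref{confronti} and Theorem \ref{mainsol}), and your closed form $\kappa=2+\sum_{m\ge2}\bigl(1-\tfrac{1}{1-2^{-m}}\prod_{j\ge m}\zeta(j)^{-1}\bigr)$ is the paper's formula after the substitution $m=j+1$. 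However, two points are genuine gaps. First, your witness family $C_2^{d}\times\prod_{p\ \mathrm{odd}}(C_p^{d}\rtimes C_{r_p})$ does not satisfy the hypothesis of the theorem: with $r_p=2$ for every $p$ the Sylow $2$-subgroup has rank $d$ plus the number of odd primes used, which is unbounded; and there is no choice of pairwise distinct primes $r_p\mid p-1$ ``outside the set of crown primes'' once that set exhausts the odd primes, while any odd $r_p$ equal to a crown prime $\ell$ pushes the Sylow $\ell$-rank to $d+1$. The paper's group $H_{\pi,d}=\bigl((\prod_{p\ne 2}C_p^d)\rtimes C_2\bigr)\times C_2^{d-1}$ repairs exactly this by letting a \emph{single} $C_2$ invert all odd components diagonally, so the Sylow $2$-subgroup remains $C_2^d$; its chief-factor data, hence $P_H(k)$ and the value of $\kappa$, are the ones you computed. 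Without this (or an equivalent) construction the ``best possible'' claim is not established.

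Second, the insoluble case is where your sketch is thinnest, and the paper's mechanism there is different from, and more elementary than, the per-crown CFSG comparison you propose. The decisive fact is not that nonabelian crowns ``decay too fast''; it is Lemma \ref{stime}(2)--(4), namely $\alpha_2(G)+\beta(G)\le d_2(G)$, which holds because a nonabelian simple group cannot have a cyclic Sylow $2$-subgroup. Thus every nonabelian chief factor consumes a unit of the Sylow-$2$ budget, so $\beta(G)\ge1$ forces $\alpha_2(G)\le d-1$; feeding this into the maximal-subgroup counting bounds inherited from \cite{alex} (Lemmas \ref{e123}, \ref{mustar}, \ref{mup}) yields $e(G)\le d+2.7501<d+\kappa$ outright (Proposition \ref{mainn}), with no need to compare insoluble groups against the extremal family at all. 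As written, your appeal to ``uniform estimates for $1-P_S(n)$ via the classification'' is a placeholder rather than a proof; note also that the exact Euler-type factorization of $P_G$ you start from is Gasch\"utz's theorem and is valid only for soluble $G$, so in the insoluble case you must either pass to the Detomi--Lucchini crown factorization with its correction terms or, as the paper does, abandon exact factorization in favour of the inequality $1-P_G(k)\le\sum_{n\ge2} m_n(G)n^{-k}$.
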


This bound can be further improved under some additional assumptions on $G.$  For example we prove that  if all the Sylow subgroups of $G$ can be generated by $d$ elements and $G$ is not soluble,  then $e(G)\leq d+2.7501$ (Proposition \ref{mainn}). A stronger result holds if $|G|$ is odd.
\begin{thm}\label{teorema}
	Let $G$ be a finite group of odd order. If all the Sylow subgroups of $G$ can be generated by $d$ elements, then $e(G)\leq d+\tilde \kappa$ with $\tilde \kappa \sim
	2.148668.$
\end{thm}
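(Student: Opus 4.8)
The plan is to import the crown machinery that underlies Theorem~\ref{tuno} and to re-run its concluding estimate over the odd primes only. The starting point is the elementary identity $e(G)=\sum_{n\ge 0}\bigl(1-P_G(n)\bigr)$, where $P_G(n)$ is the probability that $n$ independent uniform elements of $G$ generate it (this follows from $e(G)=\sum_{n\ge 1}\Pr(\tau_G\ge n)$). Since $\lvert G\rvert$ is odd, the Feit--Thompson theorem makes $G$ soluble, so every chief factor is an elementary abelian $p$-group with $p$ odd; in particular there are no nonabelian chief factors, and the separate nonsoluble analysis behind Proposition~\ref{mainn} never enters. Splitting the series at $n=d$ and bounding each of the first $d$ summands by $1$ gives $e(G)\le d+\sum_{n\ge d}\bigl(1-P_G(n)\bigr)$, so the whole theorem reduces to the tail estimate $\sum_{n\ge d}\bigl(1-P_G(n)\bigr)\le\tilde\kappa$ valid for every admissible $G$ (one must keep track of the possibility $d(G)=d+1$, which shifts the first nonzero term of the series).

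For the tail I would invoke the factorisation $P_G(s)=\prod_V P_{G,V}(s)$ of the probabilistic zeta function, the product running over representatives $V$ of the irreducible $G$-modules that occur as complemented chief factors, each factor being explicitly determined by the prime $p$, the crown width $\delta_V$, whether $V$ is central, and, in the noncentral case, by the relevant first cohomology and the index $\lvert G:C_G(V)\rvert$. The decisive observation is that the naive union bound $1-\prod_V(1-x_V)\le\sum_V x_V$ with $x_V=1-P_{G,V}(n)$ is useless here: summed over $n\ge d$ it already diverges, even after the crowns are first grouped by prime, because $\sum_{p\ \mathrm{odd}}(p-1)^{-1}=\infty$. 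One must therefore keep the full product over primes and prove a uniform \emph{lower} bound $P_G(n)\ge F_d(n)$, where $F_d(n)$ is an explicit product over the odd primes; the Sylow-$d$-generation hypothesis, via the Gaschütz/Guralnick--Lucchini control $d(G)\le d+1$ and its crown refinement, is precisely what caps the widths $\delta_V$ and the number of distinct $p$-crowns so that such a comparison holds.

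Granting the lower bound, the remaining work is analytic: one estimates $\sup_d\sum_{n\ge d}\bigl(1-F_d(n)\bigr)$. After the substitution $m=n-d$ the tail becomes a convergent double series assembled from the Euler factors $\prod_{p\ \mathrm{odd}}(1-p^{-k})=\bigl(\zeta(k)(1-2^{-k})\bigr)^{-1}$, which I would bound term by term in $m$, exactly as in Pomerance's abelian computation but with every Euler product truncated away from the prime $2$. This truncation is the whole point: dropping $p=2$ from each factor is what lowers the value from the $\kappa\sim 2.752$ of Theorem~\ref{tuno} to $\tilde\kappa\sim 2.148668$.

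The main obstacle is the noncentral crowns. In the purely abelian estimate only central (trivial) chief factors appear, whereas here the noncentral factors contribute a non-negligible part of $e(G)-d$, so they cannot be discarded and must be folded into $F_d(n)$ without overcounting. Concretely, I expect the crux to be establishing simultaneously that (i) the deficiency $1-P_{G,V}(n)$ of a noncentral $p$-crown is dominated by an explicit function of the odd prime $p$ and of $\delta_V$ alone, summable in the required way, and (ii) the Sylow-$d$-generation hypothesis genuinely bounds how many such crowns can coexist for each $p$, so that after summation the optimisation decouples across primes and collapses onto the zeta expression above. Once (i) and (ii) are secured, the numerical value $\tilde\kappa\sim 2.148668$ drops out of a delicate but essentially routine estimate of the resulting series.
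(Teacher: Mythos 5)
Your overall strategy is the right one and matches the paper's in spirit: use Feit--Thompson to reduce to the soluble case, keep the full Gasch\"utz factorisation $P_G(k)=\prod_V P_{G,V}(k)$ rather than a union bound, prove a uniform lower bound $P_G(k)\geq F_d(k)$ with $F_d$ a product of explicit Euler-type factors over the odd primes (the paper does this via Lemma \ref{confronti} and the reduction to a supersoluble comparison group in Theorem \ref{4}), and then evaluate the resulting tail. But there is a genuine gap at the decisive point: you assert that \emph{dropping the prime $2$ from each Euler factor is what lowers the constant from $\kappa\sim 2.752$ to $\tilde\kappa\sim 2.148668$}, and that is not true. If you only delete $p=2$ and allow every odd prime to carry non-central crowns (worst factor $D_{p,d}(k)=\prod_{1\leq i\leq d}(1-p^{i-k})$), the tail sum comes out noticeably larger than $2.148668$ --- roughly $2.3$ --- because the non-central factors at $p=3$ and $p=5$, namely $(1-3^{d-k+d\text{-shifts}})$ etc., dominate the early terms of the tail (already at $k=d+2$ the $p=3$ factor is $1-1/9$ instead of $1-1/27$).

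The missing idea is the Fermat-prime observation, which is the actual crux of the paper's proof. For a chief factor $A$ of order $p$ ($p$ odd), $G/C_G(A)$ embeds in $\aut(C_p)\cong C_{p-1}$; if $p$ is a Fermat prime then $C_{p-1}$ is a $2$-group, so in a group of \emph{odd} order such a factor is forced to be central. Hence for $p\in\{3,5,17,257,65537,\dots\}$ the contribution is $C_{p,d}(k)=\prod_{1\leq i\leq d}(1-p^{i-1-k})$ rather than $D_{p,d}(k)$, and only the non-Fermat odd primes can contribute the worse non-central factors. (The paper makes this clean by first replacing $G$ with a supersoluble $H$ having $P_H\leq P_G$, so that every relevant module is one-dimensional and the $\aut(C_p)$ argument applies directly; factors of order $p^r$ with $r\geq 2$ are already handled by the $C_{p,d}$-type bound of Lemma \ref{confronti}.) It is precisely the resulting correction $\prod_{p\in\Lambda}(1+\tfrac{1}{p^{j+1}-1})$-type terms for the Fermat primes, on top of the removal of $p=2$, that brings the numerical value down to $2.148668$. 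Your points (i) and (ii) about bounding the number and width of crowns per prime are fine and are exactly what Lemmas \ref{stime} and \ref{confronti} deliver, but without isolating which odd primes are forced to be central your optimisation does not collapse onto the stated constant.
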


If $G$ is a $p$-subgroup of $\perm(n),$ then $G$ can be generated by $\lfloor n/p\rfloor$ elements (see \cite{kp}), so Theorem \ref{tuno} has the following consequence: if $G$ is a permutation group of degree $n,$ then $e(G)\leq  \lfloor n/2\rfloor+\kappa.$ However this bound is not best possible and a better result can be obtained:

\begin{cor}
	If $G$ is a permutation group of degree $n,$ then either $G=\perm(3)$ and $e(G)=2.9$ or $e(G)\leq  \lfloor n/2\rfloor+\kappa^*$ with $\kappa^* \sim 1.606695.$ 
\end{cor}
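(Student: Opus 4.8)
The plan is to extract from the proof of Theorem \ref{tuno} a bound on $e(G)$ that keeps track not merely of $d:=\max_p d_p$, where $d_p$ denotes the minimal number of generators of a Sylow $p$-subgroup of $G$, but of the whole profile $(d_p)_p$, and then to exploit the fact that for a permutation group the numbers $d_p$ decay as $p$ grows. The smaller constant $\kappa^*<\kappa$ should then emerge from optimising the profile-sensitive bound subject to the permutation constraints rather than allowing every prime the full rank $d$.

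First I would record the input from \cite{kp}: if $G\leq\perm(n)$ and $P\in\Syl_p(G)$, then $P$ is a $p$-subgroup of $\perm(n)$, so $d_p=d(P)\leq\lfloor n/p\rfloor$. In particular $d\leq\lfloor n/2\rfloor$, the maximum being attained at $p=2$, while for every odd prime $p$ the sharper estimate $d_p\leq\lfloor n/p\rfloor$ holds, so that $d_p$ is of order $(2/p)d$. Writing $P(k,G)$ for the probability that $k$ uniformly random elements generate $G$ and using $e(G)=\sum_{k\geq 0}\bigl(1-P(k,G)\bigr)$, I would split the series at $k=d(G)$ (recalling $d(G)\leq d+1$) and focus all the work on bounding the tail $\sum_{j\geq 0}\bigl(1-P(d(G)+j,G)\bigr)$.

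The heart of the argument is to show that this tail is controlled by a sum of local contributions $\sum_p\lambda_p$, where $\lambda_p$ is a function of $d_p$ (and of the $p$-chief-factor structure) that is monotone decreasing in the gap $d-d_p$: intuitively, a Sylow subgroup of small rank is generated almost surely once $d$ elements have been drawn and therefore contributes little delay. In the extremal configuration underlying Theorem \ref{tuno} every prime is permitted the full rank $d$, which produces $\kappa$; here, by contrast, the constraint $d_p\leq\lfloor n/p\rfloor$ forces the odd-prime contributions to shrink. Substituting these bounds and optimising over the admissible profiles, with the extremal value realised in the limit $n\to\infty$ by elementary abelian Sylow subgroups of the largest permitted ranks, yields the explicit Riemann zeta-function expression whose numerical value is $\kappa^*\sim 1.606695$ and the bound $e(G)\leq\lfloor n/2\rfloor+\kappa^*$.

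I expect the main obstacle to be twofold. First, one must control the genuinely non-abelian contributions: since a permutation group need not be soluble, the tail receives terms coming from non-abelian chief factors, and one has to verify that these are dominated by the abelian-$p$ terms already accounted for (this is where Proposition \ref{mainn} and, for the odd part, Theorem \ref{teorema} feed in). Second, the passage from the abstract local bound to the sharp numerical constant requires evaluating the optimised zeta-type sum, checking that it is best possible, and separating off the finitely many small degrees where the asymptotic estimate is not yet valid. Direct computation in those small degrees isolates $G=\perm(3)$, with $e(\perm(3))=2.9>\lfloor 3/2\rfloor+\kappa^*$, as the unique genuine exception, completing the dichotomy.
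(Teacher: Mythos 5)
Your outline (exploit the degree constraints $d_p\leq\lfloor n/p\rfloor$, compute the small degrees directly, isolate $\perm(3)$) matches the paper's in broad shape, but the central step --- extracting $\kappa^*$ by optimising a profile-sensitive bound over all profiles with $d_p\leq\lfloor n/p\rfloor$ --- fails, because these constraints are not \emph{jointly} realisable by a degree-$n$ permutation group and the relaxed optimum strictly exceeds $\kappa^*$. Concretely, with $m=\lfloor n/2\rfloor$, the best local bounds (Lemma \ref{confronti}) give for soluble $G$ the estimate $e(G)\leq\sum_{k}\bigl(1-C_{2,m}(k)\prod_{p>2}D_{p,\lfloor n/p\rfloor}(k)\bigr)$. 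The prime $2$ alone already contributes $m+\omega_{m,0}$ with $\omega_{m,0}\nearrow\kappa^*$ and $\kappa^*-\omega_{m,0}=\Theta(2^{-m})$, while the prime $3$, allowed rank $\lfloor n/3\rfloor\approx 2m/3$, adds a further positive contribution of order $3^{\lfloor n/3\rfloor-m}\approx 3^{-m/3}$, which dominates $2^{-m}$; so the relaxed bound is $m+\kappa^*+\Theta(3^{-m/3})>m+\kappa^*$ for large $n$. (Relatedly, $\kappa^*$ is \emph{not} a Riemann zeta expression --- that is $\kappa$; here $\kappa^*=\lim_m\sum_{j\geq 0}\bigl(1-\prod_{l\geq 1}(1-2^{-(j+l)})\bigr)$ carries no odd-prime contribution at all.) The paper obtains the sharp constant by a structural dichotomy instead: if $\alpha_{2,1}(G)<m$ (or $G$ is insoluble, $n\geq 8$) the $\mu$-machinery yields the strictly smaller bound $m+1.533823$; if $\alpha_{2,1}(G)=m$, i.e.\ $C_2^m$ is a quotient of $G$, the Kov\'acs--Praeger theorem \cite{kp} forces $G$ to be a direct product of transitive constituents from $\{\perm(2),\perm(3),C_2\times C_2,D_8,D_8\circ D_8\}$, whence $G/\frat(G)\cong C_2^m$ or $C_2^{m-1}\times\perm(3)$ and $e(G)$ is computed exactly. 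Some such classification of the extremal groups is indispensable to see that they carry essentially no odd part.

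A second genuine gap is the insoluble case. Proposition \ref{mainn} gives only $e(G)\leq d+2.7501\leq m+2.7501$, far above $m+\kappa^*$, and Theorem \ref{teorema} concerns odd-order (hence soluble) groups, so neither of the results you invoke controls the nonabelian chief factors. What is actually needed is a degree-dependent bound on $\beta(G)$: the paper proves $\beta(G)\leq\lfloor n/2\rfloor-3$ for $n\geq 8$ (Lemma \ref{basso}), via Holt's theorem on representing quotients of permutation groups \cite{holt} and the Easdown--Praeger bound $n\geq 5r$ \cite{ep}, and only then does Lemma \ref{mustar} give $\mu^*(G,m)\leq 1/4$ and the bound $m+1.533823$ of Lemma \ref{notsol}. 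Without an ingredient of this kind the nonabelian contribution cannot be pushed below $\kappa^*$.
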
 
The number $\kappa^*$ is best possible. Let 	$m=\lfloor n/2\rfloor$ and set $G_n=\perm(2)^m$ if $m$ is even, $G_n=\perm(2)^{m-1}\times \perm(3)$ if $m$ is odd. If $n\geq 8,$ then $e(G_n)-m$ increase with $n$ and $\lim_{n\to \infty} e(G)-m=1.606695.$
\section{Preliminary results}

Let $G$ be a finite group and use the following notations:
\begin{itemize}
	\item For a given prime $p,$ $d_p(G)$ is the smallest cardinality of a generating set of a Sylow $p$-subgroup of $G.$
	\item For a given prime $p$ and a positive  integer $t,$ $\alpha_{p,t}(G)$ is the number of complemented  factors of  order $p^t$ in a chief series of $G.$
	\item For a given prime $p,$ $\alpha_p(G)=\sum_t \alpha_{p,t}(G)$ is the number of complemented factors of $p$-power order in a chief series of $G.$
	\item $\beta(G)$ is the number of nonabelian factors  in a chief series of $G.$
\end{itemize}

\begin{lemma}\label{stime}For every finite group $G,$ we have:
	\begin{enumerate} 
		\item $\alpha_p(G)\leq d_p(G).$
		\item $\alpha_2(G)+\beta(G)\leq d_2(G).$
		\item If $\beta(G)\neq 0$, then $\beta(G)\leq d_2(G)-1.$
		\item If $\alpha_{2,1}(G)=0,$ then $\alpha_2(G)+\beta(G)\leq d_2(G)-1.$
		\item If $\alpha_{p,1}(G)=0,$ then $\alpha_p(G)\leq d_p(G)-1.$
	\end{enumerate}
\end{lemma}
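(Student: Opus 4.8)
The plan is to prove all five inequalities at once by exploiting the additivity, along a fixed chief series, of the three invariants $\alpha_p,\beta$ and of $d_p$. Fix a prime $p$, a Sylow $p$-subgroup $P$ of $G$, and a chief series $1=G_0<G_1<\cdots<G_n=G$. For each factor set $c_i=d_p(G/G_{i-1})-d_p(G/G_i)$. Since $d_p(G/N)=d(P/(P\cap N))\le d(P)=d_p(G)$, these quotient ranks never increase, so $c_i\ge 0$, and the sum telescopes to $\sum_i c_i=d_p(G)$. As being complemented, being a $p$-factor, and being nonabelian are all read off factor by factor, it suffices to control the single number $c_i$ attached to each chief factor; indeed for a factor $V=G_i/G_{i-1}$, writing $\bar G=G/G_{i-1}$ and $\bar P=PG_{i-1}/G_{i-1}$, one has $c_i=\dim_{\FF_2}V\Phi(\bar P)/\Phi(\bar P)$ when $V\le\bar P$.

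First I would establish the two basic ``one generator'' estimates. If $V$ is a complemented $p$-factor, a complement $H$ of $V$ in $\bar G$ meets $\bar P$, by Dedekind's modular law, in a complement of $V$ in $\bar P$; a complemented nontrivial normal subgroup of a $p$-group cannot lie in the Frattini subgroup (whose elements are non-generators), so $V\not\le\Phi(\bar P)$ and $c_i\ge 1$. Summing the contributions of the complemented $p$-factors gives part (1). The analogue for a nonabelian factor $V\cong S^m$ and $p=2$ is the heart of the matter: by Feit--Thompson $\bar P\cap V$ is a nontrivial Sylow $2$-subgroup of $V$, and I must show $\bar P\cap V\not\le\Phi(\bar P)$, i.e. $c_i\ge 1$. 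Granting this, combining the two cases along the series yields part (2), $\alpha_2(G)+\beta(G)\le d_2(G)$.

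The refinements (3), (4), (5) I would deduce by a uniform ``topmost factor'' argument. Suppose one of them fails; by (1) (for (5)) or by (2) (for (3), (4)) this forces equality, so that each complemented $p$-factor, and each nonabelian factor when $p=2$, contributes exactly $c_i=1$ while every other factor contributes $c_i=0$. Let $j$ be maximal with $c_j\ne 0$ and put $V=G_j/G_{j-1}$. Then $c_i=0$ for all $i>j$ gives $d_p(G/G_j)=0$ and $d_p(G/G_{j-1})=c_j=1$, so a Sylow $p$-subgroup of $G/G_{j-1}$ is cyclic. But $V$ is either a complemented $p$-factor, hence an elementary abelian subgroup of a cyclic $p$-group and so of order $p$, contradicting $\alpha_{p,1}(G)=0$ (this yields (5) and the order-two branch of (4)); or a nonabelian factor, whose Sylow $2$-subgroup would embed in a cyclic group, impossible since a nonabelian simple group has non-cyclic Sylow $2$-subgroups by Burnside's normal $p$-complement theorem (this yields (3) and the remaining branch of (4)).

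The main obstacle is the one claim left open above: that a nonabelian minimal normal subgroup $V\trianglelefteq\bar G$ satisfies $\bar P\cap V\not\le\Phi(\bar P)$, equivalently that passing to $\bar G/V$ strictly lowers the rank of the Sylow $2$-subgroup. The naive attempts fail: $\bar P\cap V$ may meet $\Phi(\bar P)$ nontrivially (for $V=A_5\trianglelefteq S_5$ one has $\bar P\cap V=V_4$ while $\Phi(\bar P)=C_2$), and the normal closure of $\Phi(\bar P)$ can contain $V$ (in that example it equals $A_5$), so a purely Frattini-theoretic argument is hopeless. I expect the correct route to use that $V\cong S^m$ is perfect, hence generated by its Sylow $2$-subgroups, together with the imprimitive action of $\bar P$ on the $m$ simple factors: taking a $\bar P$-invariant $\FF_2$-functional on the Frattini quotient of a Sylow $2$-subgroup of one factor and summing over a $\bar P$-orbit produces a conjugation-invariant homomorphism $\bar P\cap V\to\FF_2$ killing $[\bar P,\bar P\cap V]$, which detects $\bar P\cap V$ outside $\Phi(\bar P)$. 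Arranging that this functional extends to $\bar P$ — equivalently, that the ``diagonal'' coordinate of the Sylow $2$-structure of $V$ survives in $\bar P/\Phi(\bar P)$ — is the technical core, and is where I would concentrate the real work.
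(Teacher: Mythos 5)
Your proposal has a genuine gap, and you have located it yourself: the claim that a nonabelian chief factor $V=G_j/G_{j-1}$ contributes $c_j\geq 1$, i.e.\ that $\bar P\cap V\not\leq \Phi(\bar P)$ for $\bar P\in\Syl_2(\bar G)$ and $V$ a nonabelian minimal normal subgroup of $\bar G$. This is not a technical detail to be arranged later; it is the entire non-elementary content of parts (2)--(4), and it is exactly the Guralnick--Lucchini input that the paper does \emph{not} reprove but imports wholesale by citing \cite[Lemma 4]{alex} for (1)--(3) (whose proof ultimately rests on the classification of finite simple groups, via a case analysis over the simple groups $S$ occurring in $V\cong S^m$). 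Your sketched repair is moreover essentially circular: producing a $\bar P$-invariant $\FF_2$-functional on $\bar P\cap V$ modulo $[\bar P,\bar P\cap V](\bar P\cap V)^2$ is not the issue --- the issue is extending it to a functional on $\bar P/\Phi(\bar P)$, and the obstruction to that extension is precisely the statement $\bar P\cap V\leq\Phi(\bar P)$ that you are trying to refute. No purely local/Frattini-theoretic or wreath-product argument is known to close this; so as written, (2), (3) and the nonabelian branch of (4) are unproved.

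The parts you do complete are correct and, for (4) and (5), genuinely parallel to the paper. Your telescoping bookkeeping $c_i=d_p(G/G_{i-1})-d_p(G/G_i)\geq 0$ with $\sum_i c_i=d_p(G)$, the Dedekind argument showing a complemented abelian $p$-factor satisfies $c_i\geq 1$ (giving (1)), and the ``topmost contributing factor'' refinement --- equality forces the top relevant factor to sit inside a cyclic Sylow $p$-subgroup of $G/G_{j-1}$, which is impossible if it has order $\neq p$ or is nonabelian (Burnside) --- is the same mechanism as the paper's proof of (4) and (5), where the top factor $Y_1/X_1$ is shown to satisfy $d_2(Y_1/X_1)\geq 2$. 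One small point to make explicit in your argument for (5): you use that $\alpha_{p,1}(G)=0$ rules out a complemented factor of order $p$ in \emph{your chosen} chief series, which requires the Jordan--H\"older-type invariance of the multiset of complemented chief factors (Gasch\"utz); this is standard but should be said. The verdict, however, is determined by the first paragraph: without an independent proof or an explicit citation for the nonabelian contribution to $d_2$, the proposal does not establish the lemma.
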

\begin{proof}
(1), (2) and (3) are proved in \cite[Lemma 4]{alex}. Now assume that no complemented chief factor of  $G$ has order 2 and let $r=\alpha_2(G)+\beta(G)$. There exists a sequence $X_r\leq Y_r\leq\dots \leq X_1\leq Y_1$ of normal subgroups of $G$ such that, for every $1\leq i\leq r,$ 
 $Y_i/X_i$ is a complemented chief factor of $G$ of even order. Notice that $\beta(G/Y_1)=\alpha_2(G/Y_1)=0,$ hence $G/Y_1$ is a finite soluble group all of whose complemented chief factors have odd order, but then $G/Y_1$ has odd order and consequently $d_2(G)=d_2(Y_1).$ Moreover, as in the proof of
 \cite[Lemma 4]{alex}, $d_2(Y_1)\geq d_2(Y_1/X_1)+r-1.$ Since $|Y_1/X_1|\neq 2$ and the Sylow 2-subgroups of a finite nonabelian simple cannot be cyclic \cite[10.1.9]{rob}, we deduce $d_2(Y_1/X_1)\geq 2$ and consequently $d_2(G)=d_2(Y_1)\geq r+1.$ This proves (4). The proof of (5) is similar.
\end{proof}	

Recall (see \cite[(1.1)]{alex} for more details) that
	\begin{equation}\label{inizi1}
	\begin{aligned}e(G)=\sum_{n\geq 0}(1-P_G(n))
	\end{aligned}
	\end{equation}
	where $$P_G(n) =
	\frac{|\{(g_1,\dots,g_n)\in G^n \mid \langle g_1,\dots,g_n\rangle=G\}|}{|G|^n}$$ is the probability that $n$ randomly chosen
	elements of $G$ generate $G.$
	Denote by $m_n(G)$ the number of index $n$ maximal subgroups of $G.$	We have (see \cite[11.6]{sub}):
	\begin{equation}\label{inizi2}1-P_G(k)\leq \sum_{n\geq 2}\frac{m_n(G)}{n^{k}}.
	\end{equation}
	
	Using the notations introduced in \cite[Section 2]{pak}, we say that a maximal subgroup $M$ of $G$ is of type A if $\soc(G/\core_G(M))$ is abelian, of type B otherwise, and we denote by $m^A_n(G)$ (respectively $m^B_n(G)$)  the number of maximal subgroups of $G$ of type A (respectively B) of
	index $n.$  Given $t\in \mathbb N$ and $p\in \pi(G),$  define $$\mu^*(G,t)=\sum_{k\geq t}\left(\sum_{n\geq 5} \frac{m_n^B(G)}{n^k}\right), \quad \mu_p(G,t)=\sum_{k\geq t}\left(\sum_{n\geq 1}\frac{m_{p^n}^A(G)}{p^{nk}}\right).$$
	
	\begin{lemma}\label{e123}
Let $t\in \mathbb N.$ Then $e(G)\leq t+\mu^*(G,t)+\sum_{p\in \pi(G)}\mu_p(G,t).$
	\end{lemma}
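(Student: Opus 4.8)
The plan is to start from the series expression \eqref{inizi1}, namely $e(G)=\sum_{n\geq 0}(1-P_G(n))$, and split it at the threshold $t$. For the initial segment $0\leq n\leq t-1$ I would use only the trivial estimate $1-P_G(n)\leq 1$, which holds because $P_G(n)$ is a probability; this segment therefore contributes at most $t$ to $e(G)$. For the tail $k\geq t$ the idea is to feed in the bound \eqref{inizi2}, giving
$\sum_{k\geq t}(1-P_G(k))\leq \sum_{k\geq t}\sum_{n\geq 2}\frac{m_n(G)}{n^k}$,
and then to split $m_n(G)=m_n^A(G)+m_n^B(G)$ according to the type of the maximal subgroups.

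The heart of the argument is to identify exactly which indices $n$ can occur in each type, and both facts are standard consequences of the O'Nan--Scott description of primitive permutation groups. If $M$ is of type A, then $\soc(G/\core_G(M))$ is an abelian minimal normal subgroup, hence elementary abelian of order $p^a$ for some prime $p$, and it acts regularly on the cosets of $M/\core_G(M)$; thus $[G:M]=p^a$ is a prime power, and necessarily $p\in\pi(G)$. Consequently $m_n^A(G)=0$ unless $n$ is a prime power, so that $\sum_{n\geq 2}\frac{m_n^A(G)}{n^k}=\sum_{p\in\pi(G)}\sum_{j\geq 1}\frac{m_{p^j}^A(G)}{p^{jk}}$. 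If instead $M$ is of type B, then $G/\core_G(M)$ is a primitive group with nonabelian socle, and the minimal degree of such a group is $5$ (attained by $\alt(5)$ on five points); hence $m_n^B(G)=0$ for $n\leq 4$ and $\sum_{n\geq 2}\frac{m_n^B(G)}{n^k}=\sum_{n\geq 5}\frac{m_n^B(G)}{n^k}$.

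Substituting these two reindexed sums into the tail estimate, interchanging the order of the (nonnegative) summations over $k$ and $n$, and recognizing the definitions of $\mu^*(G,t)$ and $\mu_p(G,t)$ then yields $\sum_{k\geq t}(1-P_G(k))\leq \mu^*(G,t)+\sum_{p\in\pi(G)}\mu_p(G,t)$. Adding the contribution of at most $t$ from the initial segment gives the desired inequality.

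I do not expect any genuine obstacle here: the lemma is in essence a repackaging of \eqref{inizi1} and \eqref{inizi2} through the type-A/type-B dichotomy. The only point requiring care is the bookkeeping of the index sets, that is, checking that every prime power arising as a type A index has its base prime in $\pi(G)$ (automatic, since a subgroup of index $p^a$ forces $p\mid|G|$) and that the type B indices are confined to $n\geq 5$; once these two restrictions are in place, the rearrangement of the double series is routine and the two halves of the bound assemble directly into $\mu^*(G,t)$ and $\sum_{p\in\pi(G)}\mu_p(G,t)$.
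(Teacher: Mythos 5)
Your proposal is correct and follows essentially the same route as the paper, which simply writes $e(G)\leq t+\sum_{k\geq t}(1-P_G(k))\leq t+\sum_{k\geq t}\sum_{n\geq 2}m_n(G)/n^k$ and leaves the type A/type B bookkeeping implicit in the definitions of $\mu^*$ and $\mu_p$. Your extra justification that type A indices are prime powers $p^a$ with $p\in\pi(G)$ and that type B indices are at least $5$ is exactly the content the paper takes for granted.
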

	\begin{proof}
By (\ref{inizi1}) and (\ref{inizi2}), $$e(G)\leq t+\sum_{n\geq t}(1-P_G(n))\leq t+\sum_{k\geq t}\left(\sum_{n\geq 2}\frac{m_n(G)}{n^k}\right). \ \qedhere$$
	\end{proof}
	
\begin{lemma}\label{mustar}Let $t\in \mathbb N$. 
If $\beta(G)=0,$ then $\mu^*(G,t)=0.$
If $t\geq \beta(G)+3,$ then $$\mu^*(G,t)\leq \frac{\beta(G)(\beta(G)+1)}{2\cdot 5^{t-4}}\cdot \frac{1}{4}.$$	
\end{lemma}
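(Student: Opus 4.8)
The plan is to treat the two assertions separately: the first is structural, while the second reduces, after an interchange of summation, to a counting estimate for the maximal subgroups of type B. For the first assertion, recall that if $M$ is a maximal subgroup of $G$ of type B, then $\soc(G/\core_G(M))$ is nonabelian; being a minimal normal subgroup of $G/\core_G(M)$, it is the image of a nonabelian chief factor of $G$ occurring in any chief series refining $\core_G(M)\trianglelefteq G$. Hence $\beta(G)=0$ forces $m_n^B(G)=0$ for every $n$, and therefore $\mu^*(G,t)=0$. Equivalently, $\beta(G)=0$ means that every chief factor of $G$ is abelian, so $G$ is soluble and every primitive epimorphic image of $G$ has abelian socle, leaving no room for maximal subgroups of type B.

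For the second assertion I would first rewrite $\mu^*(G,t)$ as a single Dirichlet-type series. All summands being nonnegative, the order of summation may be interchanged, giving
\[
\mu^*(G,t)=\sum_{n\geq 5}m_n^B(G)\sum_{k\geq t}\frac{1}{n^{k}}=\sum_{n\geq 5}\frac{m_n^B(G)}{n^{t-1}(n-1)}.
\]
In this form the dominant contribution is the term $n=5$, namely $m_5^B(G)/(4\cdot 5^{t-1})$, and the stated bound $\tfrac{\beta(G)(\beta(G)+1)}{2}\cdot\tfrac{1}{4\cdot 5^{t-4}}$ is exactly what one obtains from a per-exponent estimate of the shape $\sum_{n\geq 5}m_n^B(G)/n^{k}\leq \tfrac{\beta(G)(\beta(G)+1)}{2}\cdot 5^{3-k}$ followed by summing the resulting geometric series over $k\geq t$ (one checks directly that this recovers the constant $\tfrac{1}{4\cdot 5^{t-4}}$). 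Thus the whole problem is reduced to establishing such an estimate for each $k\geq t$.

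The heart of the matter is therefore to bound the number $m_n^B(G)$ of type B maximal subgroups of index $n$, and this is where I expect the real work to lie. The approach is to attach to each type B maximal subgroup $M$ the nonabelian chief factor carried by $\soc(G/\core_G(M))$, and to group the maximal subgroups according to the crown of $G$ associated with that factor. For a fixed such factor the index $n=[G:M]$ is at least $5$, the minimum being realised by the degree-$5$ primitive actions with socle $\alt(5)$, which is precisely why the inner sum starts at $n=5$; and the number of maximal subgroups supplementing the corresponding crown is expected to be controlled by the number of chief factors lying in that crown. Summing these per-factor contributions over the nonabelian chief factors of a chief series, ordered by their position, should produce a triangular sum of the form $1+2+\cdots+\beta(G)=\tfrac{\beta(G)(\beta(G)+1)}{2}$, which accounts for the triangular coefficient.

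The hypothesis $t\geq \beta(G)+3$ is what I would use to guarantee that all the nested geometric series, over both $n$ and the crown multiplicities, converge with enough margin for the clean closed form to hold. Verifying this convergence together with the precise per-crown count for nonabelian chief factors is the main obstacle: it is the step that genuinely uses the structure of primitive groups of type B, and I would carry it out by invoking the bounds on type B maximal subgroups recorded in \cite{pak} and \cite{alex} rather than re-deriving them from scratch.
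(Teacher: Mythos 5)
Your proposal is correct and follows essentially the same route as the paper, whose entire proof of this lemma is the single line "It follows from \cite[Lemma 8]{alex} and its proof": the first claim (no nonabelian chief factor $\Rightarrow$ no type B maximal subgroup) and the geometric-series arithmetic recovering the constant $\frac{1}{4\cdot 5^{t-4}}$ from a per-$k$ bound $\sum_{n\geq 5}m_n^B(G)n^{-k}\leq \frac{\beta(G)(\beta(G)+1)}{2}\,5^{3-k}$ both check out, and the one substantive ingredient (that bound on $m_n^B(G)$, valid for $k\geq\beta(G)+3$) is exactly what you, like the paper, import from \cite{pak} and \cite{alex}. Your crown-based heuristic for the triangular coefficient is plausible but unverified; it does no harm since you correctly identify and source the estimate that is actually needed.
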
	
\begin{proof}
It follows from \cite[Lemma 8]{alex} and its proof.
\end{proof}

\begin{lemma}\label{mup}For $t\in \mathbb N$ and $p\in \pi(G).$ If $\alpha_p(G)=0,$ then $\mu_p(G,t)=0.$ 
\begin{enumerate}
\item If $\alpha_2(G)\leq t-1$ and $\alpha_{2,u}(G)\leq t-2$ for every $u>1,$ then $$\mu_2(G,t)\leq \frac{1}{2^{t-\alpha_2(G)-1}}.$$
\item  Let $p$ be an odd prime. If $\alpha_p(G)\leq t-2$ then $$\mu_p(G,t)\leq \frac{1}{p^{t-\alpha_p(G)-2}}\frac{1}{(p-1)^2}.$$
\end{enumerate}		
\end{lemma}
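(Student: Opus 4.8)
My plan is to reduce the whole lemma to a single growth estimate for the count $m^A_{p^n}(G)$ of type-A maximal subgroups of index $p^n$. First I would interchange the two summations in the definition of $\mu_p(G,t)$: summing the geometric series $\sum_{k\ge t}p^{-nk}$ over $k$ first gives
\begin{equation*}
\mu_p(G,t)=\sum_{n\ge 1}m^A_{p^n}(G)\,\frac{1}{p^{n(t-1)}(p^n-1)},
\end{equation*}
so everything becomes a statement about how fast $m^A_{p^n}(G)$ can grow with $n$. The vanishing assertion is then immediate: if $\alpha_p(G)=0$ there is no complemented abelian chief factor of $p$-power order, so for no maximal $M$ of $p$-power index can $\soc(G/\core_G(M))$ be an abelian $p$-group; hence $m^A_{p^n}(G)=0$ for every $n$ and $\mu_p(G,t)=0$.

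The key input is an upper bound for $m^A_{p^n}(G)$ in terms of $\alpha_{p,n}(G)$. Here I would invoke the crown-theoretic count of type-A maximal subgroups from \cite{alex}: the maximal subgroups of index $p^n$ with abelian socle are governed by the complemented chief factors $V$ of order $p^n$, their endomorphism fields $\End_G(V)$, and their multiplicities in a chief series, with $\alpha_{p,n}(G)$ playing the role of an exponent. Concretely I would use an estimate of the shape $m^A_{p^n}(G)\le \frac{p^{\,n\alpha_{p,n}(G)}-1}{p^n-1}$ for the central contributions — which is sharp for elementary abelian groups — together with the control of the non-central contributions supplied by \cite{alex}. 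Writing $\frac{p^{\,na}-1}{p^n-1}=\sum_{j=0}^{a-1}p^{nj}$ with $a=\alpha_{p,n}(G)$ then turns each summand of the displayed formula into an explicit combination of powers of $p$.

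The remaining work is a geometric-series estimate, which I would split along two axes. For odd $p$ the factor $\frac{1}{(p^n-1)^2}\le\frac{1}{p^{2(n-1)}(p-1)^2}$ gives room: bounding each term by $\frac{p^2}{(p-1)^2}\,p^{\,n(\alpha_{p,n}(G)-t-1)}$ and summing over $n$ — using $n\ge 1$, the product-versus-sum inequality $\sum_n p^{\alpha_{p,n}(G)}\le p^{\alpha_p(G)}$, and $\alpha_{p,n}(G)\le\alpha_p(G)\le t-2$ — yields the claimed bound $\frac{1}{p^{\,t-\alpha_p(G)-2}(p-1)^2}$ (in fact with one extra power of $p$ to spare). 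For $p=2$ the factor $(p-1)^2=1$ makes the $n=1$ term dominant and non-negligible, which is precisely why the hypotheses are asymmetric: the overall bound $\alpha_2(G)\le t-1$ controls the leading term $\frac{2^{\alpha_{2,1}(G)}-1}{2^{t-1}}$, while the sharper requirement $\alpha_{2,u}(G)\le t-2$ for $u>1$ forces every $n\ge 2$ term to decay like $2^{-3n}$, so the tail $\sum_{n\ge 2}$ fits inside the slack and the total stays at most $\frac{1}{2^{\,t-\alpha_2(G)-1}}$.

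The main obstacle is the second step, the bound on $m^A_{p^n}(G)$. The naive crown count $\frac{p^{\,n\alpha_{p,n}(G)}-1}{p^n-1}$ is correct for central chief factors but undercounts when the relevant module is non-central — already $m^A_{3}(\perm(3))=3$ while $\frac{3^1-1}{3-1}=1$ — because of the coboundary and first-cohomology contribution to the number of complements. Importing from \cite{alex} the correct estimate, which keeps these extra contributions under control while remaining sharp for abelian groups, is the delicate point; once it is in place both geometric computations above are routine, and the asymmetry between the $p=2$ and odd-$p$ hypotheses is dictated exactly by whether $p^n-1$ can equal $1$.
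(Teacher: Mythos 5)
Your overall skeleton --- interchange the two sums to get $\mu_p(G,t)=\sum_{n\ge 1}m^A_{p^n}(G)\,p^{-n(t-1)}(p^n-1)^{-1}$, observe that $\alpha_p(G)=0$ forces $m^A_{p^n}(G)=0$ for all $n$, and then feed a crown-theoretic bound for $m^A_{p^n}(G)$ into a geometric series --- is indeed the shape of the argument, and it is worth noting that the paper itself offers no more than the citation ``it follows from \cite[Lemma 7]{alex} and its proof,'' so you are reconstructing that reference rather than competing with a written-out proof. The vanishing case and the sum interchange are correct.

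The gap is in the step you yourself flag as delicate, and it is not merely a matter of importing a black box: your explicit computations are calibrated to the central-only count $m^A_{p^n}(G)\le\frac{p^{n\alpha_{p,n}(G)}-1}{p^n-1}$, which (as your own example $m^A_3(\perm(3))=3$ shows) is false in general. The correct crown count carries an extra factor $|A|^{\zeta_A}=p^n$ for each nontrivial module $A$ of order $p^n$ (plus a first-cohomology factor outside the soluble case), i.e.\ $m^A_{p^n}(G)\le\sum_{|A|=p^n}|A|^{\zeta_A}\frac{q_A^{\delta_A}-1}{q_A-1}$, and your claim that with this in place ``both geometric computations above are routine'' does not hold as stated. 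For odd $p$ your term-by-term bound $\frac{p^2}{(p-1)^2}p^{n(\alpha_{p,n}(G)-t-1)}$ leaves total slack $\sum_{n\ge1}p^{-3n+2}=\frac{p^2}{p^3-1}$, roughly one power of $p$; multiplying the $n$-th term by the missing $p^n$ turns this into $\sum_{n\ge1}p^{-2n+2}=\frac{p^2}{p^2-1}>1$, so the corrected sum overshoots the target $\frac{1}{p^{t-\alpha_p(G)-2}(p-1)^2}$. A similar tension appears for $p=2$, where the $n=1$ term $\frac{2^{\alpha_{2,1}(G)}-1}{2^{t-1}}$ already nearly exhausts the budget $2^{-(t-\alpha_2(G)-1)}$ and the $n\ge2$ tail cannot simply be bounded term-by-term by $(2^n-1)^{-2}$. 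To close the lemma one must keep the exact factors $\frac{q_A^{\delta_A}-1}{q_A-1}$ (not their power-of-$p$ majorants) and exploit that $\sum_n\alpha_{p,n}(G)=\alpha_p(G)$ globally --- so that large multiplicities at one $n$ force small ones elsewhere --- which is precisely the bookkeeping carried out in \cite[Lemma 7]{alex}. As written, your proposal identifies the right ingredients but does not establish the stated constants.
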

\begin{proof}
	It follows from \cite[Lemma 7]{alex} and its proof.
\end{proof}

Let $G$ be a finite soluble group and let $\mathcal A$ be a set of representatives for the irreducible $G$-module that are $G$-isomorphic to some complemented chief factor of $G.$ For every $A\in\mathcal A,$ let  $\delta_A$ be the number of
complemented factors $G$-isomorphic to $A$ in a chief series of $G$,  $q_A=|\End_{G}(A)|$, $r_A=\dim_{\End_{G}(A)}(A),$ $\zeta_A=0$ if $A$ is a trivial $G$-module, $\zeta_A=1$ otherwise. Moreover, for every $l\in \mathbb N,$ let $Q_{A,l}(s)$ be the Dirichlet polynomial defined by
$$Q_{A,l}(s)=1-\frac{q_A^{l+r_A\cdot \zeta_A}}{q_A^{r_A\cdot s}}.$$
By \cite[Satz 1]{g2}, for every positive integer $k$ we have
\begin{equation}\label{gzpr}
P_G(k)=\prod_{A\in \mathcal A}\left(\prod_{0\leq l\leq \delta_A-1}Q_{A,l}(k)\right).
\end{equation}
For every prime $p$ dividing $|G|$, let $\mathcal A_p$ be the subset of $\mathcal A$ consisting of the irreducible $G$-modules  having order a power of $p$ and let $$P_{G,p}(k)=\prod_{A\in \mathcal A_p}\left(\prod_{0\leq l\leq \delta_A-1}Q_{A,l}(k)\right).$$
\begin{defn}For every prime $p$ and every positive integer $\alpha$ let
$$C_{p,\alpha}(s)=\prod_{0\leq i\leq \alpha-1}\left(1-\frac{p^i}{p^s}\right), \quad D_{p,\alpha}(s)=\prod_{1\leq i\leq \alpha}\left(1-\frac{p^i}{p^s}\right).$$
\end{defn}
\begin{lemma}\label{confronti}
Let $G$ be a finite soluble group and let $k$ be a positive integer.
\begin{enumerate}
\item If $d_p(G)\leq d$, then $P_{G,p}(k)\geq D_{p,d}(k)$.
\item If $p$ divides $|G/G^\prime|,$ then $P_{G,p}(k)\geq C_{p,d}(k)$.
\item If $\alpha_{p,1}(G)=0,$ then $P_{G,p}(k)\geq C_{p,d}(k)$.
\item If $d_2(G)\leq d$, then $P_{G,2}(k)\geq C_{2,d}(k)$.
\end{enumerate}
\end{lemma}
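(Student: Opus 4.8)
The plan is to read every inequality off the Gasch\"utz factorization (\ref{gzpr}). Restricting that product to $\mathcal A_p$ gives $P_{G,p}(k)=\prod_{A\in\mathcal A_p}\prod_{l=0}^{\delta_A-1}Q_{A,l}(k)$, and since $q_A=p^{f_A}$ with $f_A\ge 1$ and $n_A:=f_Ar_A=\log_p|A|$, each factor takes the shape $Q_{A,l}(k)=1-p^{-e_{A,l}}$ with $e_{A,l}=f_A\bigl(r_A(k-\zeta_A)-l\bigr)$, $l=0,\dots,\delta_A-1$. Thus both sides of each claim are products of terms $1-p^{-a}$, and the elementary monotonicity $1-p^{-a}\ge 1-p^{-b}\iff a\ge b$ reduces every claim to a comparison of exponent multisets, valid whenever the terms in play are positive (i.e. for $k$ in the relevant range; the finitely many small $k$, where $C_{p,d}$ and $D_{p,d}$ are $\le 0$, are handled separately using $P_{G,p}(k)\ge 0$).

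Next I would cut the number of factors on the target side. Since $\alpha_p(G)\le d_p(G)\le d$ by Lemma \ref{stime}(1), the polynomials $C_{p,d}$ and $D_{p,d}$ carry $d$ factors while $P_{G,p}$ carries only $\alpha:=\alpha_p(G)=\sum_A\delta_A$ of them; as the surplus factors of $C_{p,d}$ (resp. $D_{p,d}$) are $\le 1$, it suffices to prove $P_{G,p}(k)\ge C_{p,\alpha}(k)$ (resp. $\ge D_{p,\alpha}(k)$), where now both sides have exactly $\alpha$ factors. Sorting the $\alpha$ exponents $e_{A,l}$ decreasingly, the inequality follows once the $j$-th largest exponent dominates the $j$-th target exponent for all $j\le\alpha$; equivalently, writing $N(T)=\#\{(A,l):e_{A,l}\ge T\}$, it is enough that $N(k-j)\ge j$ for the $D$-comparison and $N(k-j+1)\ge j$ for the $C$-comparison. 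A direct count gives $N(k-j)\ge\sum_A\min(\delta_A,j)$, and $\sum_A\min(\delta_A,j)\ge j$ whenever $j\le\alpha$ (if some $\delta_A\ge j$ this is immediate, otherwise the sum equals $\alpha\ge j$); this settles (1).

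The three strengthenings to the $C$-bound all reduce to supplying the extra exponent $k$ demanded by the $i=0$ factor of $C$. The structural facts are: the trivial $\mathbb{F}_p$-module is one-dimensional and contributes exponents $k,k-1,\dots$ (reaching $k$); a module of $p$-power order $\ge p^2$ has $n_A\ge 2$, so its top exponent $n_A(k-1)\ge 2(k-1)\ge k$; and the only modules whose exponents never reach $k$ are the nontrivial one-dimensional ones, i.e. the nontrivial complemented chief factors of order exactly $p$. For (3), $\alpha_{p,1}(G)=0$ forces every $A\in\mathcal A_p$ to have $n_A\ge 2$, whence $N(k-j+1)\ge\alpha\ge j$. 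For (4), $\GL_1(\mathbb{F}_2)=1$ means there are no nontrivial one-dimensional modules, so each $A\in\mathcal A_2$ is either trivial or has $n_A\ge 2$, and the same count gives $N(k-j+1)\ge j$. For (2), I would use that $p\mid|G/G'|$ is equivalent to the trivial $\mathbb{F}_p$-module occurring as a complemented chief factor (a $C_p$-quotient of $G$ furnishes a complemented top chief factor, and conversely a complemented central $p$-factor yields such a quotient), i.e. $\delta_0\ge 1$; the exponent $k$ is then present, and tracking $\delta_0$ in the count produces $N(k-j+1)\ge j$ in every case.

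I expect the main obstacle to be the exponent bookkeeping for higher-dimensional modules: although their top exponents clearly clear the thresholds, verifying that enough of their lower factors do so as well—so that the clean bound $N(T)\ge\sum_A\min(\delta_A,\cdot)$ really holds—forces $k$ into a suitable range (essentially $k\gtrsim d$), and the residual small values of $k$ must then be checked by hand via positivity of $P_{G,p}$ against nonpositivity of the comparison polynomials. The identity $C_{p,d}(s)=(1-p^{-s})\,D_{p,d-1}(s)$ is a convenient bridge between the two bounds, confirming in particular that $C_{p,d}\le D_{p,d-1}$ and keeping the factor-count reductions mutually consistent.
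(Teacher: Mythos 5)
Your proposal is correct and follows essentially the same route as the paper: both read the claims off the Gasch\"utz factorization and reduce them to a term-by-term comparison of factors, your sorted-exponent count $N(k-j)\ge j$ being a repackaging of the paper's ordered comparison $q_i^{r_i\zeta_i+l}\le q_i^{r_ij}$ (with the same restriction to $k>d$, resp.\ $k\ge d$, and the same disposal of small $k$ via vanishing of $D_{p,d}$ and $C_{p,d}$). The only cosmetic differences are that the paper derives (4) from (2) and (3) via the dichotomy $\alpha_{2,1}(G)\ne 0$ or $\alpha_{2,1}(G)=0$ --- which is precisely your observation that $\GL_1(\FF_2)$ is trivial --- and proves (3) from (1) using $\alpha_p(G)\le d-1$ together with $D_{p,d-1}\ge C_{p,d}$, the identity you invoke at the end.
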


\begin{proof}
		Suppose that $\mathcal A_p=\{A_1,\dots ,A_t\}$ and let $q_i=q_{A_i},$ $r_i=r_{A_i},$ $\zeta_i=\zeta_{A_i}$ and $\delta_i=\delta_{A_i}.$	
	Recall that
		\begin{equation}\label{a}
		P_{G,p}(k)=\prod_{\substack{1 \leq i \leq t \\        
				0 \leq l \leq \delta_i-1}}
		Q_{A_i,l}(k).
		\end{equation}
			By Lemma \ref{stime},  $\delta_1+\delta_2+\dots +\delta_t=\alpha_p(G)\leq d_p(G)\leq d$, hence the number of factors $Q_{A_i,l}(k)$ in (\ref{a}) is at most $d$. 
		We order these factors in such a way that $Q_{A_i,u}(k)$ precedes $Q_{A_j,v}(k)$ if either $i < j$ or $i=j$ and $u < v$. Moreover we order the elements of $\mathcal A_p$ in such a way that $A_1$ is the trivial $G$-module if $p$ divides $|G/G^\prime|.$
		
\noindent 1)
	Since $D_{p,d}(k)=0$ if $k\leq d$,  we may take $k>d$. 
To show that $P_{G,p}(k)\geq D_{p,d}(k),$ it is sufficient to show that the $j$-th factor $Q_j(k)=Q_{A_i,l}(k)$ of $P_{G,p}(k)$ is greater than the $j$-th factor 
$$D_j(k)=1-\frac{p^j}{p^k}$$ of $D_{p,d}(k)$. 
	If $j\leq \delta_1$ then $Q_j(k)=Q_{A_1,l}(k)
$ with $l=j-1$.
	If $j>\delta_1$ then $Q_j(k)=
Q_{A_i,l}(k)$
	for some $i\in \{2,\dots, t\}$ and $l\in \{0,\dots, \delta_i-1\}$, thus $$j=\delta_1+\delta_2+\dots +\delta_{i-1}+l+1\geq l+2.$$ In any case,
	$$q_i^{r_i\zeta_i}q_i^{l}\leq q_i^{r_i(l+1)}\leq q_i^{r_ij}.$$
We have $q_i=p^{n_i}$ for some $n_i\in \mathbb N.$ Since $j\leq d<k$, we deduce that
	\begin{equation*}
	\frac{q_i^{r_i\zeta_i}q_i^{l}}{q_i^{r_ik}}\leq \frac{q_i^{r_ij}}{q_i^{r_ik}} =\left(\frac{{p^{j}}}{{p^k}}\right)^{r_in_i}\leq \frac{p^j}{p^k}.
	\end{equation*}	
But then	
	\begin{equation*}
Q_j(k)=	1-\frac{q_i^{r_i\zeta_i}q_i^{l}}{q_i^{r_ik}}\geq 1-\frac{p^j}{p^k}=D_j(k).
	\end{equation*}
	
	\noindent
2)  Since $C_{p,d}(k)=0$ if $k < d$,  we may take $k\geq d$. 
	To show that $P_{G,p}(k)\geq C_{p,d}(k),$ it is sufficient to show that the $j$-th factor $Q_j(k)=Q_{A_i,l}(k)$ of $P_{G,p}(k)$ is greater than the $j$-th factor 
	$$C_j(k)=1-\frac{p^{j-1}}{p^k}$$ of $C_{p,d}(k)$.
	 If $i=1,$ then, by the way in which we ordered the elements of $\mathcal A_p$, we have $Q_j(k)=C_j(k)$. Otherwise, as we have seen in the proof of (1), $l+2\leq j$ so $r_i\zeta_i+l \leq r_i+j-2\leq r_i(j-1)$. Since $j\leq d\leq k$, we deduce that
	\begin{equation*}
	\frac{q_i^{r_i\zeta_i}q_i^{l}}{q_i^{r_ik}}\leq \frac{q_i^{r_i(j-1)}}{q_i^{r_ik}} \leq \frac{p^{j-1}}{p^k}\text {\ \ and\  \ }
	Q_j(k)=	1-\frac{q_i^{r_i\zeta_i}q_i^{l}}{q_i^{r_ik}}\geq 1-\frac{p^{j-1}}{p^k}=C_j(k).
	\end{equation*}
	
	\noindent 3) 	 Assume that no complemented chief factor of  $G$ has order $p.$ By (5) of Lemma \ref{stime}, $\alpha_p(G)\leq d_p(G)-1\leq d-1.$ But then the factors $Q_{A_i,l}(k)$ in (\ref{a}) are at most $d-1$ and, arguing as in the proof of (1), we conclude $P_{G,p}(k)\geq D_{p,d-1}(k)\geq C_{p,d}(k)$

	\noindent 4) We may assume $\alpha_2(G)\neq 0$ (otherwise $P_{G,2}(k)=1).$ Since $\alpha_{2,1}(G)\neq 0$ if and only if $2$ divides $|G/G^\prime|$, the conclusion follows from (2) and (3).
\end{proof}

\section{The main result}\label{main}
 \begin{prop}\label{mainn}Let $G$ be a finite group. If all the Sylow subgroups of $G$ can be generated by $d$ elements and $G$ is not soluble,  then $$e(G)\leq d+\kappa^* \quad \text { with } \quad \kappa^* \leq 2.7501.$$
 \end{prop}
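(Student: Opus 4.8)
The plan is to bound $e(G)$ through the master inequality of Lemma \ref{e123} with threshold $t=d+2$, estimate the three resulting pieces by Lemmas \ref{mustar} and \ref{mup}, and exploit the fact that non-solubility forces one Sylow $2$-generator to be ``spent'' on a nonabelian chief factor. Write $\beta=\beta(G)$ and $\alpha_p=\alpha_p(G)$. Since $G$ is not soluble we have $\beta\geq 1$, and Lemma \ref{stime}(3) gives $\beta\leq d_2(G)-1\leq d-1$, so $d\geq\beta+1$. First I would check that the choice $t=d+2$ meets every hypothesis: $\alpha_p\leq d_p(G)\leq d=t-2$ by Lemma \ref{stime}(1) (so Lemma \ref{mup}(2) applies to each odd $p$); $\alpha_2\leq d+1=t-1$ and $\alpha_{2,u}\leq\alpha_2\leq d=t-2$ (so Lemma \ref{mup}(1) applies); and $\beta\leq d-1$, i.e. $t\geq\beta+3$ (so Lemma \ref{mustar} applies). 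Lemma \ref{e123} then gives
\begin{equation*}
e(G)\leq (d+2)+\mu^*(G,d+2)+\mu_2(G,d+2)+\sum_{p\geq 3}\mu_p(G,d+2).
\end{equation*}

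The key quantitative input is a sharpening of Lemma \ref{stime}(2): when $\beta\geq 1$ one has in fact $\alpha_2+\beta\leq d_2(G)-1$. I would prove this exactly as in Lemma \ref{stime}(4), taking a chain of $r=\alpha_2+\beta$ complemented chief factors of even order arranged so that the top factor $Y_1/X_1$ is nonabelian (possible since $\beta\geq 1$); the telescoping estimate $d_2(G)\geq d_2(Y_1/X_1)+r-1$, together with the fact that a nonabelian simple group has non-cyclic Sylow $2$-subgroups (whence $d_2(Y_1/X_1)\geq 2$), yields $d_2(G)\geq r+1$. Hence $\alpha_2\leq d-1-\beta$, and Lemma \ref{mup}(1) improves to $\mu_2(G,d+2)\leq 2^{-(d+1-\alpha_2)}\leq 2^{-(\beta+2)}$. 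For odd primes Lemma \ref{mup}(2) with $\alpha_p\leq d$ gives $\mu_p(G,d+2)\leq p^{-(d-\alpha_p)}(p-1)^{-2}\leq (p-1)^{-2}$, so $\sum_{p\geq 3}\mu_p(G,d+2)\leq\rho:=\sum_{p\geq 3}(p-1)^{-2}$. Finally Lemma \ref{mustar} gives $\mu^*(G,d+2)\leq \beta(\beta+1)/(8\cdot 5^{d-2})$.

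Collecting these I would be left with
\begin{equation*}
e(G)-d\leq 2+\rho+\Big(\tfrac{1}{2^{\beta+2}}+\tfrac{\beta(\beta+1)}{8\cdot 5^{d-2}}\Big),
\end{equation*}
so the task reduces to maximising the bracketed function over the admissible range $\beta\geq 1$, $d\geq\beta+1$. It is decreasing in $d$, so its maximum is attained at $d=\beta+1$; a short check shows the resulting one-variable expression also decreases in $\beta$, so the maximum $0.25+0.125=0.375$ occurs at $\beta=1$, $d=2$. This gives $e(G)-d\leq 2.375+\rho$.

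The main obstacle is this final step, and it is genuinely delicate because the margin is minute. One has $\rho=\sum_{p\geq 3}(p-1)^{-2}=\big(\sum_{p}(p-1)^{-2}\big)-1\approx 0.375$, so $2.375+\rho$ lands at roughly $2.750$, only a hair below the stated target. I therefore expect the real work to be the precise evaluation of the prime sum $\rho$ (an explicit constant, in the spirit of the zeta-function description of $\kappa$ in Theorem \ref{tuno}) and the verification that $2.375+\rho<2.7501$; the structural inputs---the choice $t=d+2$, the sharpened inequality $\alpha_2+\beta\leq d_2(G)-1$, and the identification of $\beta=1$, $d=2$ as the extremal configuration---are comparatively routine once Lemmas \ref{stime}, \ref{mustar} and \ref{mup} are in hand. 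Should the crude estimate $\mu^*(G,2)\leq 1/4$ at $\beta=1$, $d=2$ prove too lossy to clear the target, I would replace it by the actual, far smaller, value of $\mu^*$ for a group with a single nonabelian chief factor, whose maximal subgroups of minimal index have index at least $5$ with tightly controlled multiplicities; this is the point at which the classification of finite simple groups would enter.
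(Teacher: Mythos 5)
Your overall architecture (threshold $t=d+2$ in Lemma \ref{e123}, then Lemmas \ref{mustar} and \ref{mup} for the three pieces, plus the Hardy--Littlewood-type constant $\rho=\sum_{p\geq 3}(p-1)^{-2}\approx 0.37506$) is exactly the paper's. But the step your whole numerical margin rests on is false: it is \emph{not} true that $\beta(G)\geq 1$ forces $\alpha_2(G)+\beta(G)\leq d_2(G)-1$. Take $G=\perm(5)$: here $\beta(G)=1$, the chief factor $\perm(5)/\alt(5)\cong C_2$ is complemented, so $\alpha_2(G)=\alpha_{2,1}(G)=1$, while a Sylow $2$-subgroup is $D_8$, so $d_2(G)=2$ and $\alpha_2(G)+\beta(G)=2=d_2(G)$. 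Your proof sketch of the sharpening breaks at the parenthetical ``(possible since $\beta\geq 1$)'': chief factors cannot in general be reordered so that the nonabelian one sits at the top of the chain of complemented even-order factors --- in $\perm(5)$ the factor $\alt(5)$ is the unique minimal normal subgroup and is stuck at the bottom, and the telescoping argument of Lemma \ref{stime}(4) then only sees the cyclic factor $C_2$ at the top, where $d_2=1$. With the correct inequality $\alpha_2\leq d-\beta$ your bound on $\mu_2(G,d+2)$ degrades from $2^{-(\beta+2)}$ to $2^{-(\beta+1)}$, and at the extremal configuration $\beta=1$, $d=2$ your bracket becomes $\tfrac14+\tfrac14=\tfrac12$, giving $e(G)\leq d+2.8751$ --- precisely the old bound from \cite{alex} that this proposition is meant to beat.

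The paper closes this gap by a case split rather than a sharpened Lemma \ref{stime}(2). When $\beta<d-1$ the generic estimates already give $d+2+\tfrac1{20}+\tfrac14+\rho\leq d+2.6751$. When $\beta=d-1$, parts (2) and (4) of Lemma \ref{stime} force either $\alpha_2(G)=0$ or $\alpha_2(G)=\alpha_{2,1}(G)=1$; in the latter case $m_2^A(G)=2^{\alpha_{2,1}(G)}-1=1$ and all type-A maximal $2$-power-index subgroups have index exactly $2$, so one bounds $\mu_2(G,d+2)\leq\sum_{k\geq d+2}2^{-k}\leq\tfrac18$ directly from the definition instead of via Lemma \ref{mup}. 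That recovers the $\tfrac14+\tfrac18=\tfrac38$ you were aiming for, but by exploiting the multiplicity $m_2^A(G)\leq 1$, not a nonexistent improvement of $\alpha_2+\beta\leq d_2$. If you want to salvage your write-up, replace the ``sharpening'' by this two-case analysis; the rest of your computation (monotonicity in $d$ and $\beta$, the evaluation of $\rho$) then goes through as in the paper.
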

 \begin{proof}
 Let $\beta=\beta(G).$ Since $G$ is not soluble, $\beta > 0$, hence by (2) and (3) of Lemma \ref{stime}, we have $1\leq \beta\leq d_2(G)-1\leq d-1$ and $\alpha_2(G)\leq d_2(G)-\beta\leq d-1.$ 
 	We distinguish two cases:	
 	
 	\noindent a) $\beta < d-1.$ By Lemma \ref{e123},  \ref{mustar} and
\ref{mup} and using an accurate estimation of $\sum_p (p-1)^{-2}$ given in \cite{hc}, we conclude
$$\begin{aligned}e(G)&\leq d+2+\mu^*(G,d+2)+\mu_2(G,d+2)+\sum_{p>2}\mu_p(G,d+2)\\&\leq d+2+\frac{1}{20}+\frac{1}{4}+\sum_{p>2}\frac{1}{(p-1)^2}\leq d+2.6751.
\end{aligned}$$
	\noindent b) $\beta = d-1.$ By  (2) and (4) of Lemma \ref{stime}, either $\alpha_2(G)=0$  or $\alpha_2(G)=\alpha_{2,1}(G)=1.$ In the first case $\mu_2(G,d+2)=0,$ in the second case $m_{2}^A(G)=1$ and consequently  $$\mu_{2}(G,d+2)=\sum_{k\geq d+2}\frac{m_{2}^A(G)}{2^{k}}\leq  \sum_{k\geq d+2}\frac{1}{2^{k}}\leq  \sum_{k\geq 4}\frac{1}{2^{k}}\leq \frac{1}{8}.$$
 By Lemma \ref{e123}, \ref{mustar} and
 \ref{mup}, we conclude
 $$\begin{aligned}e(G)&\leq d+2+\mu^*(G,d+2)+\mu_2(G,d+2)+\sum_{p>2}\mu_p(G,d+2)\\&\leq d+2+\frac{1}{4}+\frac{1}{8}+\sum_{p>2}\frac{1}{(p-1)^2}\leq d+2.7501.\quad\qedhere
 \end{aligned}$$
 \end{proof}
 
The previous proposition reduces the proof of Theorem \ref{tuno} to the particular case when $G$ is soluble. To deal with this case, we are going to introduce, for every positive integer $d$ and every set of primes $\pi$, a supersoluble group $H_{\pi,d}$ with the property that $e(G)\leq e(H_{\pi,d})$ whenever $G$ is soluble, $\pi(G)\subseteq \pi$ and the Sylow subgroups of $G$ are $d$-generated.

\begin{defn}Let $\pi$ be a finite set of prime integers with $2\in \pi,$ and let 
	$d$ be a positive integer. We define $H_{\pi,d}$ as the semidirect product
	$$H_{\pi,d}=\left(\left(\prod_{p \in \pi \setminus \{2\}} C_p^d\right) \rtimes C_2\right)\times C_2^{d-1}$$ where $C_p$ is the cyclic group of order $p$ and $C_2=\langle y \rangle$ acts on $A=\prod_{p \in \pi \setminus \{2\}} C_p^d$ by setting $x^y=x^{-1}$ for all $x\in A$.
	\end{defn} 
	
	 \begin{thm}\label{mainsol}Let $G$ be a finite soluble group. If all the Sylow subgroups of $G$ can be generated by $d$ elements,  then $e(G)\leq e(H_{\pi,d})$, where $\pi=\pi(G)\cup\{2\}.$
	 \end{thm}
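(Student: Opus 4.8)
The plan is to reduce the statement to a pointwise comparison of generation probabilities and then invoke Lemma \ref{confronti}, which was engineered for exactly this purpose. Since $e(G)=\sum_{n\geq 0}(1-P_G(n))$ by (\ref{inizi1}), and the same formula holds for $H_{\pi,d}$, it suffices to prove that $P_G(k)\geq P_{H_{\pi,d}}(k)$ for every positive integer $k$: summing the inequalities $1-P_G(k)\leq 1-P_{H_{\pi,d}}(k)$ then yields $e(G)\leq e(H_{\pi,d})$, the term $n=0$ contributing $1$ on both sides.

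First I would compute $P_{H_{\pi,d}}(k)$ explicitly via (\ref{gzpr}). Write $A=\prod_{p \in \pi \setminus \{2\}} C_p^d$. For an odd prime $p\in\pi$, the factor $C_p^d$ carries the inversion action of $\langle y\rangle$, so as a module for $\langle y\rangle$ over $\FF_p$ it is the direct sum of $d$ copies of the nontrivial one-dimensional module on which $y$ acts as $-1$ (nontrivial precisely because $p$ is odd); hence this module $A_p$ has $q_{A_p}=p$, $r_{A_p}=1$, $\zeta_{A_p}=1$, and $d$ complemented chief factors, so $\delta_{A_p}=d$, and its contribution is $\prod_{0\leq l\leq d-1}(1-p^{l+1}/p^k)=D_{p,d}(k)$. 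For $p=2$, the quotient $H_{\pi,d}/A\cong C_2^{d}$ is elementary abelian, so the $2$-power chief factors are $d$ complemented trivial modules of order $2$, giving $\zeta=0$ and contribution $\prod_{0\leq l\leq d-1}(1-2^{l}/2^k)=C_{2,d}(k)$. Therefore $P_{H_{\pi,d}}(k)=C_{2,d}(k)\prod_{p\in\pi\setminus\{2\}}D_{p,d}(k)$.

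Next, since $G$ is soluble, (\ref{gzpr}) factors $P_G(k)=\prod_{p\in\pi(G)}P_{G,p}(k)$, and by Lemma \ref{confronti} each factor is bounded below by the corresponding factor of $P_{H_{\pi,d}}(k)$: part (1) gives $P_{G,p}(k)\geq D_{p,d}(k)$ for every odd $p\in\pi(G)$, and part (4) gives $P_{G,2}(k)\geq C_{2,d}(k)$ when $2$ divides $|G|$. All the bounds $C_{2,d}(k)$ and $D_{p,d}(k)$ are nonnegative (they vanish for small $k$ and are products of factors in $[0,1]$ otherwise), as is each $P_{G,p}(k)$ by the same lemma. If $2\in\pi(G)$, then $\pi=\pi(G)$ and multiplying these inequalities factorwise gives $P_G(k)\geq P_{H_{\pi,d}}(k)$ at once. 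If $|G|$ is odd, then $\pi=\pi(G)\cup\{2\}$ with $2\notin\pi(G)$, and using $0\leq C_{2,d}(k)\leq 1$ together with the odd-prime bounds,
$$P_{H_{\pi,d}}(k)=C_{2,d}(k)\prod_{p\in\pi(G)}D_{p,d}(k)\leq \prod_{p\in\pi(G)}D_{p,d}(k)\leq \prod_{p\in\pi(G)}P_{G,p}(k)=P_G(k),$$
which is again the desired inequality.

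The only delicate point is the chief-factor bookkeeping for $H_{\pi,d}$ in the second paragraph: one must verify that the inversion action produces the \emph{nontrivial} module (so that $\zeta_{A_p}=1$ and the odd contribution is $D_{p,d}$ rather than $C_{p,d}$), and that the $2$-part contributes \emph{trivial} modules (so $\zeta=0$ and the contribution is exactly $C_{2,d}$). Once the invariants $q_A,r_A,\zeta_A,\delta_A$ are pinned down, everything else is the factorwise application of Lemma \ref{confronti}, where the real work has already been done.
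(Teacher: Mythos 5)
Your proposal is correct and follows essentially the same route as the paper: compute $P_{H_{\pi,d},p}(k)=D_{p,d}(k)$ for odd $p$ and $P_{H_{\pi,d},2}(k)=C_{2,d}(k)$ via (\ref{gzpr}), apply parts (1) and (4) of Lemma \ref{confronti} factorwise, and sum $1-P(k)$ using (\ref{inizi1}). You merely spell out details the paper leaves implicit, namely the module invariants $q_A,r_A,\zeta_A,\delta_A$ for $H_{\pi,d}$ and the harmless extra factor $C_{2,d}(k)\leq 1$ when $2\notin\pi(G)$.
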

\begin{proof}
Let $H=H_{\pi,d}$, $p\in \pi$ and $k\in\mathbb N.$ By (\ref{gzpr}), $P_{H,p}(k)=D_{p,d}(k)$ if $p\neq 2,$ while $P_{H,2}(k)=C_{2,d}(k).$ By Lemma \ref{confronti},
$P_{G,p}(k)\geq P_{H,p}(k)$ for every $p\in \pi(G).$ This implies 
$$P_G(k)=\prod_{p\in \pi(G)}P_{G,p}(k)\geq \prod_{p\in \pi}P_{H,p}(k)=P_H(G)$$
and consequently $e(G)=\sum_{k\geq 0}(1-P_G(k))\leq \sum_{k\geq 0}(1-P_H(k))=e(H).$
\end{proof} 
\begin{defn}Let $e_d=\sup_{\pi}e(H_{\pi,d})$ and $\kappa=\sup_d (e_d-d).$
\end{defn}
Let $2\in \pi$ and let $\pi^*=\pi\setminus \{2\}.$ Since $P_{H_{\pi,d}}(k)=0$ for all $k\leq d$ we have
$$\begin{aligned}
e(H_{\pi,d})&=\sum_{k\geq 0} \left(1 - P_{H_{\pi,d}}(k)\right)=d+1+\sum_{k\geq d+1}\left(1-C_{2,d}(k)\prod_{p\in \pi^*}D_{p,d}(k)\right)
\\&=d+1+\sum_{k\geq d+1}\left(1-\prod_{1\leq i\leq d}\left(1-\frac{2^{i-1}}{2^k}\right)\prod_{p\in \pi^*}\prod_{1\leq i\leq d}\left(1-\frac{p^i}{p^k}\right)\right)\\&=d+1+\sum_{t\geq 0}\left(1-\prod_{1\leq i\leq d}\left(1-\frac{2^{i-1}}{2^{t+(d+1)}}\right)\prod_{p\in \pi^*}\prod_{1\leq i\leq d}\left(1-\frac{p^i}{p^{t+(d+1)}}\right)\right)
.\end{aligned}$$
We immediately deduce that $e(H_{\pi,d})-d$ increase as $d$ increase. Moreover we have
$$\begin{aligned}
e_d-d&=\sup_\pi \left(e(H_{\pi,d})-d\right)\\&=1+\sum_{k\geq d+1}\left(1-\frac{(1-\frac{1}{2^k})}{(1-\frac{2^d}{2^k})}\prod_{p}\prod_{1\leq i\leq d}\left(1-\frac{p^i}{p^k}\right)\right).\end{aligned}$$
For $k=d+1$ the double product goes to $0$ while for $k\geq d+2$ goes to $\prod_{1\leq i\leq d}{\zeta(k-i)}^{-1}$ and so we get
$$\begin{aligned}
e_d-d&=2+\sum_{k\geq d+2}\left(1-\frac{(1-\frac{1}{2^k})}{(1-\frac{2^d}{2^k})}\prod_{1\leq i\leq d}{\zeta(k-i)}^{-1}\right)\\
&=2+\sum_{j\geq 1}\left(1-\frac{(1-\frac{1}{2^{j+(d+1)}})}{(1-\frac{1}{2^{j+1}})}\prod_{1\leq l\leq d}{\zeta(j+l)}^{-1}\right)\\
&=2+\sum_{j\geq 1}\left(1-\left(\frac{2^{j+1}-2^{-d}}{2^{j+1}-1}\right)\prod_{1+j\leq n\leq d+j}{\zeta(n)}^{-1}\right).
\end{aligned}$$
Let $c=\prod_{2\leq n\leq \infty}{\zeta(n)}^{-1}
.$ Since $e_d-d$ increases as $d$ grows,
we get
$$\begin{aligned}
\kappa& = \lim_{d \to \infty} e_d-d	\\	
&=2+\left(1-\left(\frac{2^{2}}{2^{2}-1}\right)c\right)+\sum_{j\geq 2}\left(1-\left(\frac{2^{j+1}}{2^{j+1}-1}\right)c\prod_{2\leq n\leq j}{\zeta(n)}\right)\\
&=2+\left(1-\frac{4}{3}\cdot c\right)+\sum_{j\geq 2}\left(1-\left(1+\frac{1}{2^{j+1}-1}\right)c\prod_{2\leq n\leq j}{\zeta(n)}\right).
\end{aligned}$$
Using the computer algebra system \textbf{PARI/GP} \cite{PARI2}, we get
\begin{equation*}
\kappa=2+\left(1-\frac{4}{3}\cdot c\right)+\sum_{j\geq 2}\left(1-\left(1+\frac{1}{2^{j+1}-1}\right)c\prod_{2\leq n\leq j}{\zeta(n)}\right)\sim 2.75239495.
\end{equation*}
Combining this result with Proposition \ref{mainn} and Theorem \ref{mainsol}, we obtain the proof of Theorem \ref{tuno}.

\section{Finite groups of odd order}

\begin{thm} \label{4}
	Let $G$ be a finite soluble group. There exists a finite supersoluble group $H$ such that
	\begin{enumerate}
		\item $\pi(H)=\pi(G)$,
		\item $P_G(k)\geq P_H(k)$ for all $k\in \mathbb{N}$,
		\item $d_p(G)\geq d_p(H)$ for all $p\in \pi(G)$,
		\item $\pi(G/G^\prime) \subseteq \pi(H/H^\prime).$
	\end{enumerate}
\end{thm}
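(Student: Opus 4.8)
The plan is to realize $H$ by refining the construction of $H_{\pi,d}$ from Theorem \ref{mainsol}, but tailoring the module structure one prime at a time so that no prime is added and the abelianization is recorded. Writing $d_p=d_p(G)$, I would set
$$S=\bigl\{p\in\pi(G)\mid \alpha_{p,1}(G)\neq 0 \text{ and } p\nmid |G/G^\prime|\bigr\}.$$
This is exactly the set of primes for which Lemma \ref{confronti} does \emph{not} directly give $P_{G,p}(k)\geq C_{p,d_p}(k)$: for $p\notin S$ one of parts (2),(3),(4) applies, whereas for $p\in S$ only part (1) is available, giving $P_{G,p}(k)\geq D_{p,d_p}(k)$. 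Note that $2\notin S$, since a complemented chief factor of order $2$ is automatically central. I would then define
$$H=\Bigl(\prod_{p\in S}C_p^{d_p}\Bigr)\rtimes\Bigl(\prod_{q\in\pi(G)\setminus S}C_q^{d_q}\Bigr),$$
with the action chosen so that each factor $C_p^{d_p}$ (for $p\in S$) becomes $d_p$ copies of a single nontrivial $1$-dimensional $\mathbb F_p$-module, while the top factor is central. Using (\ref{gzpr}) this will force $P_{H,p}(k)=D_{p,d_p}(k)$ for $p\in S$ and $P_{H,q}(k)=C_{q,d_q}(k)$ for $q\notin S$, whence $P_G(k)=\prod_pP_{G,p}(k)\geq\prod_pP_{H,p}(k)=P_H(k)$ by the comparisons above.

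The delicate point, which I expect to be the main obstacle, is exhibiting a legitimate \emph{scalar} action realizing the nontrivial modules while keeping $H$ supersoluble. In a supersoluble group any chief factor $C_p^{d_p}$ acted on nontrivially lies in $H^\prime$, and elements of $H^\prime$ act trivially on every cyclic chief factor, because the action on such a factor factors through the abelian group $\aut(C_p)$. Hence the acting elements must be chosen outside $S$. The observation that makes this possible is that, for $p\in S$, a nontrivial complemented chief factor $A$ of order $p$ yields a nontrivial homomorphism $G\to\aut(A)\cong C_{p-1}$; since the target is abelian it factors through $G/G^\prime$, so the order $m>1$ of its image divides both $|G/G^\prime|$ and $p-1$. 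Therefore every prime $q\mid m$ satisfies $q\in\pi(G/G^\prime)$ (so $q\notin S$) and $q<p$. I would fix one such $q=q(p)$ for each $p\in S$ and let a generator of $C_{q(p)}^{d_{q(p)}}$ act on $C_p^{d_p}$ by a scalar of order $q(p)$ in $\mathbb F_p^{\ast}$, all other generators acting trivially. Because $S$ and $\pi(G)\setminus S$ are disjoint sets of primes, $V=\prod_{p\in S}C_p^{d_p}$ is a normal Hall subgroup and $T=\prod_{q\notin S}C_q^{d_q}$ a complement, so this is a genuine semidirect product with no circular dependence among the actions.

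It then remains to verify the four assertions, all of which I expect to be routine once the construction is in place. Supersolubility follows because each action is by scalar matrices, so every $1$-dimensional subspace of each $C_p^{d_p}$ is $H$-invariant, giving a chief series through $V$ with cyclic factors, refined by central factors through $T$. Property (1) is immediate, as each prime of $\pi(G)$ occurs with multiplicity $d_p\geq 1$ and no new prime is introduced. For (3), the $p$-elements of $H$ all lie in $V$ or all in $T$, so the Sylow $p$-subgroup is exactly $C_p^{d_p}$ and $d_p(H)=d_p=d_p(G)$. Property (2) is the module computation via (\ref{gzpr}) together with the comparisons recorded in the first paragraph. Finally, for (4) one checks $H^\prime=V$: the scalar actions force $C_p^{d_p}\leq H^\prime$ for $p\in S$, while $H/V\cong T$ is abelian, so $\pi(H/H^\prime)=\pi(G)\setminus S$; since $p\in S$ implies $p\nmid|G/G^\prime|$, we have $\pi(G/G^\prime)\cap S=\emptyset$, i.e. $\pi(G/G^\prime)\subseteq\pi(G)\setminus S=\pi(H/H^\prime)$.
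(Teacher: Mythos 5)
Your construction is correct and is essentially the paper's: the paper builds the same kind of group inductively, adjoining one prime at a time in increasing order and taking either a central factor $C_p^{d_p}$ or a semidirect factor $C_p^{d_p}\rtimes H_i$ with the action driven by a prime $q$ dividing both $p-1$ and $|G/G^\prime|$, which is exactly your dichotomy via the set $S$ and your choice of $q(p)$. Your one-shot assembly $H=V\rtimes T$ with scalar actions is a valid non-inductive repackaging of the same argument, resting on the same applications of Lemma \ref{confronti} and of (\ref{gzpr}).
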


\begin{proof} 	Let $\pi(G)=\{p_1,\dots ,p_n\}$ with $p_1\leq\dots\leq p_n$. For $i\in\{1,\dots,n\},$ set $\pi_i= \{p_1,\dots ,p_i\}$. We will prove, by induction on $i,$ that for every $i\in\{1,\dots,n\}$ there exists  a supersoluble group $H_i$ such that $\pi(H_i)=\pi_i$ and, for every $j\leq i$, 
	\begin{enumerate} \item $P_{H_i,p_j}(k)\leq P_{G,p_j}(k)$ for all $k\in \mathbb{N}$, \item $d_{p_j}(H_i)\leq d_{p_j}(G),$ \item if $C_{p_j}$ is an epimorphic image of $G$, then $C_{p_j}$ is  an epimorphic image of $H_{i}$.
		\end{enumerate}
Assume that $H_i$ has been constructed and set $p_{i+1}=p$ and $d_p(G)=d_p$.
We distinguish two different cases:

\noindent 1) Either $p$ divides $|G/G^\prime|$ or $G$ contains no complemented chief factor of order $p.$
	We consider the direct product
$H_{i+1}=H_i\times C_p^{d_p}.$ Clearly $P_{H_{i+1},p_j}(k)=P_{H_i,p_j}(k)\leq P_{G,p_j}(k)$ if $j\leq i.$ Moreover, by (2) and (3) of Lemma \ref{confronti}, $P_{H_{i+1},p}(k)=C_{p,d_p}(k)\leq P_{G,p}(k).$

\noindent 2) $p$ does not divide $|G/G^\prime|$ but $G$ contains a complemented chief factor which is isomorphic to a nontrivial $G$-module, say $A,$ of order $p$. In this case $G/C_G(A)$ is a nontrivial cyclic group whose order divides $p-1.$ Let $q$ be a prime divisor of
$|G/C_G(A)|$ (it must be $q=p_j$ for some $j\leq i$). Since $q$ divides $|G/G^\prime|,$ we have that $q$ divides also $|H_i/H_i^\prime|,$ hence there exists a normal subgroup $N$ of $H_i$ with $H_i/N\cong C_q$ and a nontrivial action of $H_i$ on $C_p$ with kernel $N.$
We use this action to construct the supersoluble group $H_{i+1}=C_p^{d_p}\rtimes H_i.$ Clearly $P_{H_{i+1},p_j}(k)=P_{H_i,p_j}(k)\leq P_{G,p_j}(k)$ if $j\leq i.$ Moreover, by (1) of Lemma \ref{confronti}, $P_{H_{i+1},p}(k)=D_{p,d_p}(k)\leq P_{G,p}(k).$

We conclude the proof, noticing that $H=H_n$ satisfies the requests in our statement.
\end{proof}
\begin{proof}[Proof of Theorem \ref{mainsol}]
Let $\pi(G)=\pi.$ By Theorem \ref{4}, there exists a supersoluble group $H$ such that $\pi(H)=\pi,$ $d_p(H)\leq d$ for every $p\in \pi$ and $P_G(k)\geq P_H(k)$ for every $k\in \mathbb N.$ In particular $e(G)=\sum_{k\geq 0}\left(1-P_G(k)\right)\leq \sum_{k\geq 0}\left(1-P_H(k)\right)=e(H).$ 

Since $H$ is supersoluble, if $A$ is $H$-isomorphic to a chief factor of $H,$ then $|A|=p$ for some $p\in \pi$ and $H/C_H(A)$ is a cyclic group of order dividing $p-1.$ If $p$ is a Fermat prime, then $H/C_H(A)$ is a 2-group and, since $|H|$ is odd, we must have $H=C_H(A).$ This implies that if $p\in \pi$ is a Fermat prime, then
$P_{H,p}(k)=C_{p,d_p(H)}(k)\geq C_{p,d}(k).$ For all the other primes in $\pi,$ by (1) of Lemma \ref{confronti} we have $P_{H,p}(k)\geq D_{p,d}(k).$
Therefore, denoting by $\Lambda$ the set of the Fermat primes and by $\Delta$ the set of the remaining odd primes, we get
\begin{equation*}
P_H(k)=\prod_{p\in \pi}P_{H,p}(k)
\geq \underset{p\in \Lambda}{\prod}C_{p,d}(k)\underset{p \in \Delta}{\prod}D_{p,d}(k).
\end{equation*}
It follows that
\begin{equation*}
	\begin{split}
	e(H)&=\sum_{k\geq 0}\left(1-P_H(k)\right)\\
	&\leq \sum_{k\geq 0}\left(1-\underset{p\in \Lambda}{\prod}\prod_{1\leq i\leq d}\left(1-\frac{p^{i-1}}{p^k}\right)\underset{\substack{p\in \Delta\\ p\neq 2}}{\prod}\prod_{1\leq i\leq d}\left(1-\frac{p^{i}}{p^k}\right)\right)\\
	&=d+1+\sum_{k\geq d+1}\left(1-\underset{p\in \Lambda}{\prod}\prod_{1\leq i\leq d}\left(1-\frac{p^{i-1}}{p^k}\right)\underset{\substack{p\in \Delta}}{\prod}\prod_{1\leq i\leq d}\left(1-\frac{p^{i}}{p^k}\right)\right)\\
	&= d+1+\sum_{t\geq 0}\left(1-\prod_{p\in \Lambda}\prod_{1\leq i\leq d}\left(1-\frac{p^{i-1}}{p^{t+(d+1)}}\right)\prod_{p\in \Delta
		}\prod_{1\leq i\leq d}\left(1-\frac{p^i}{p^{t+(d+1)}}\right)\right).
	\end{split}
	\end{equation*}
	Let $$\tilde\kappa_d=\sum_{t\geq 0}\left(1-\prod_{p\in \Lambda}\prod_{1\leq i\leq d}\left(1-\frac{p^{i-1}}{p^{t+(d+1)}}\right)\prod_{p\in \Delta
	}\prod_{1\leq i\leq d}\left(1-\frac{p^i}{p^{t+(d+1)}}\right)\right)+1.$$
It can be easily check that $\tilde\kappa_d$ increase as $d$ increases. Let
\begin{equation*}
b={\underset{1\leq n\leq \infty}{\prod}\left(1-\frac{1}{2^{n}}\right)^{-1}},\qquad c=\prod_{2\leq n\leq \infty}{\zeta(n)}^{-1}
\end{equation*}
and let $\Lambda^*=\{3,\;5,\;17,\ 257,\; 65537\}$ be the set of the known Fermat primes.
Similar computations to the ones in the final part of Section \ref {main}
lead to the conclusion
$$\begin{aligned}
\tilde\kappa_d
& \leq 3\!-\!\frac{b\cdot c}{2}\prod_{p\in \Lambda}\frac{p^{2}}{p^{2}\!-\!1}+\sum_{j\geq 2} \left(1\!-\! b\!\!\underset{1\leq n\leq j}{\prod}\!\!\!\left(\!1-\!\frac{1}{2^{n}}\right)\prod_{p\in \Lambda}\left(1\!+\!\frac{1}{p^{j+1}-1}\right)\!c\!\!\prod_{2\leq n\leq j}\!\!\!{\zeta(n)}\right) \\
& \leq 3\!-\!\frac{b\cdot c}{2}\!\prod_{p\in \Lambda^*}\!\frac{p^{2}}{p^{2}\!-\!1}\!+\sum_{j\geq 2} \left(\!1\!-\! b\!\!\underset{1\leq n\leq j}{\prod}\!\!\!\left(1-\frac{1}{2^{n}}\right)\prod_{p\in \Lambda^*}\!\!\left(1\!+\!\frac{1}{p^{j+1}-1}\right)\!c\!\!\!\prod_{2\leq n\leq j}\!\!\!{\zeta(n)}\right)\!.
\end{aligned}$$
Let $$\tilde \kappa=3\!-\!\frac{b\cdot c}{2}\!\prod_{p\in \Lambda^*}\!\frac{p^{2}}{p^{2}\!-\!1}\!+\sum_{j\geq 2} \left(\!1\!-\! b\underset{1\leq n\leq j}{\prod}\!\!\!\left(1-\frac{1}{2^{n}}\right)\prod_{p\in \Lambda^*}\!\!\left(1\!+\!\frac{1}{p^{j+1}-1}\right)\!c\!\!\!\prod_{2\leq n\leq j}\!\!\!{\zeta(n)}\right).$$
With the help of \textbf{PARI/GP}, we get  that $\tilde \kappa \sim 2.148668.$ \end{proof}

\section{Permutation groups}

\begin{thm}{\cite[Corollary]{kp}\label{kopr}} If $G$ is a $p$-subgroup of $\perm(n),$ then $G$ can be generated by $\lfloor n/p\rfloor$ elements.
\end{thm}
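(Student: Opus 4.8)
The plan is to prove the equivalent statement that every $p$-subgroup $G$ of $\perm(n)$ satisfies $d(G)\le\lfloor n/p\rfloor$, where $d(G)$ is the minimal size of a generating set; since $G$ is a $p$-group, $d(G)=\dim_{\mathbb{F}_p}G/\Phi(G)$. I would argue by induction on the degree $n$, the case $n<p$ being trivial since then $G=1$. The two workhorses throughout are the elementary bound $d(G)\le d(G/N)+d(N)$ for $N\trianglelefteq G$ (lift a generating set of the quotient and adjoin one of $N$) and the superadditivity $\lfloor a/p\rfloor+\lfloor b/p\rfloor\le\lfloor(a+b)/p\rfloor$. The induction splits according to whether $G$ is transitive.

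If $G$ is intransitive, I would single out one orbit $\Omega_1$ and let $\Omega_2$ be the union of the remaining orbits, with $|\Omega_i|=n_i<n$. Taking $N$ to be the pointwise stabiliser of $\Omega_1$, the quotient $G/N$ embeds in $\perm(\Omega_1)$ and $N$ acts faithfully on $\Omega_2$, so both are $p$-groups of strictly smaller degree. The inductive hypothesis and the two inequalities above give $d(G)\le d(G/N)+d(N)\le\lfloor n_1/p\rfloor+\lfloor n_2/p\rfloor\le\lfloor n/p\rfloor$. The point worth stressing is that treating $N$ as a permutation group on $\Omega_2$, rather than as the transitive constituent of $G$ there, sidesteps entirely the delicate question of how many generators a subdirect product of the constituents needs.

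The substantial case is when $G$ is transitive, so $n=p^a$; the case $a=1$ forces $G\cong C_p$ and is clear, so assume $a\ge 2$. Here I would choose a block system $\mathcal{B}$ with blocks of size $p$ (one exists because in a $p$-group the point stabiliser, being proper, is contained in a subgroup in which it has index $p$), so that $|\mathcal{B}|=p^{a-1}=n/p$. Let $N$ be the kernel of the action of $G$ on $\mathcal{B}$. Restricted to a block, $N$ induces a $p$-subgroup of the symmetric group on $p$ points, hence one of order dividing $p$; thus $N$ is elementary abelian and embeds $G$-equivariantly in $C_p^{\mathcal{B}}$, i.e.\ $N$ is an $\mathbb{F}_p$-module for $\bar G=G/N$, where $\bar G\le\perm(\mathcal{B})$ is transitive of degree $n/p$. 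Writing $\mu(N)$ for the minimal number of generators of $N$ as an $\mathbb{F}_p\bar G$-module, the generation bound refines to $d(G)\le d(\bar G)+\mu(N)$, and the inductive hypothesis applied to $\bar G$ gives $d(\bar G)\le\lfloor(n/p)/p\rfloor=p^{a-2}$.

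The main obstacle, and where I would spend the effort, is the estimate for $\mu(N)$; here the naive bound $\mu(N)\le d(N)$ loses too much. I would instead pick a central element $g$ of $\bar G$ of order $p$. Because $\bar G$ is transitive and $g$ is central, its fixed-point set on $\mathcal{B}$ is $\bar G$-invariant, hence empty, so $g$ is a fixed-point-free product of $p^{a-2}$ cycles of length $p$. In the group algebra $g-1$ lies in the augmentation ideal $I$, so $(g-1)N\subseteq IN$ and therefore $\mu(N)=\dim N/IN\le\dim N/(g-1)N=\dim N^{\langle g\rangle}$, the fixed space of $g$ on $N$. Since $N^{\langle g\rangle}\le(C_p^{\mathcal{B}})^{\langle g\rangle}$ and $g$ has exactly $p^{a-2}$ orbits on $\mathcal{B}$, this fixed space has dimension at most $p^{a-2}$. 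Combining, $d(G)\le p^{a-2}+p^{a-2}=2p^{a-2}\le p^{a-1}=\lfloor n/p\rfloor$, the last step holding precisely because $p\ge 2$. I expect the only genuinely delicate points to be the existence of the size-$p$ block system and the check that the conjugation action of $\bar G$ on $N$ is a (possibly twisted) permutation action on the $C_p^{\mathcal{B}}$ coordinates, so that the orbit count controlling $\dim N^{\langle g\rangle}$ survives the twist; both are routine once set up.
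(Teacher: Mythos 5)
Your proposal is correct, but note that the paper does not prove this statement at all: it is quoted verbatim as the Corollary of Kov\'acs--Praeger \cite{kp}, where it is deduced from a theorem bounding the abelian quotients of permutation groups. So what you have written is a self-contained replacement for an external citation rather than a variant of an argument in the paper. Your induction is sound. The intransitive step is the standard one, and the observation that one should bound $d(N)$ by viewing $N$ as a permutation group on $\Omega_2$ (not as a subdirect factor) is exactly the right way to make $\lfloor n_1/p\rfloor+\lfloor n_2/p\rfloor\leq\lfloor n/p\rfloor$ close the case. In the transitive step the two points you flag as delicate do go through: (i) blocks of size $p$ exist because $G_\alpha<N_G(G_\alpha)$ forces a subgroup $H$ with $G_\alpha<H$ and $[H:G_\alpha]=p$; (ii) choosing a generator $\sigma_B$ of each nontrivial $N^B$, conjugation by a lift of $g$ sends the coordinate vector $(x_B)$ to $(c(B)\,x_{gB})$ with $c(B)\in\mathbb{F}_p^{\times}$ independent of the element of $N$, so a fixed vector is determined on each $\langle g\rangle$-orbit of blocks by its value at one block, and $\dim N^{\langle g\rangle}$ is at most the number of orbits, namely $p^{a-2}$, twist or no twist. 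The inequality $d(G)\leq d(\bar G)+\mu(N)$ is legitimate because $\mathbb{F}_p[\bar G]$ is local with radical the augmentation ideal, so Nakayama gives $\mu(N)=\dim N/IN\leq\dim N/(g-1)N=\dim N^{\langle g\rangle}$; and generators of $N$ as a normal subgroup together with lifts of generators of $\bar G$ do generate $G$ since $N$ is abelian. The resulting bound $2p^{a-2}$ equals $n/p$ exactly when $p=2$, so there is no slack to waste, and your argument indeed does not waste any. Compared with the Kov\'acs--Praeger route, your proof is more elementary and local (it never leaves the category of $p$-groups), at the cost of proving only the $p$-group corollary and not the general statement about abelian quotients from which they derive it.
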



\begin{thm}{\cite[Theorem 10.0.5]{nina}}\label{nin}
	The chief length of a permutation group of degree $n$ is at most $n-1.$		
\end{thm}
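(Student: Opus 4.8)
The plan is to prove, by induction on the degree $n$, the slightly stronger statement that if $G\le\perm(n)$ has exactly $t$ orbits then its chief length $c(G)$ (the number of factors in a chief series, well defined by Jordan--H\"older) satisfies $c(G)\le n-t$; the asserted bound is the transitive case $t=1$. Throughout I will use two elementary facts. First, additivity along a normal subgroup: if $K\trianglelefteq G$ then $c(G)=c_G(K)+c(G/K)$, where $c_G(K)$ denotes the number of $G$-chief factors lying inside $K$. Second, and crucially, $c_G(K)\le c(K)$: a $G$-chief series of $K$ consists of subgroups that are in particular normal in $K$, so it refines to an abstract chief series of $K$, and refinement only increases the number of factors.

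The intransitive case is a direct peeling argument. If $t\ge 2$, pick one orbit $\Omega_1$ of size $n_1<n$ and let $K$ be its pointwise stabiliser in $G$. Then $G/K$ is the transitive constituent on $\Omega_1$, of degree $n_1<n$, so by induction $c(G/K)\le n_1-1$; and $K$ acts faithfully on the remaining $n-n_1<n$ points with $t-1$ orbits, so by induction $c(K)\le(n-n_1)-(t-1)$. Combining additivity with $c_G(K)\le c(K)$ gives $c(G)\le n-t$. For a transitive imprimitive $G$ I would fix a minimal block system with $m$ blocks, $1<m<n$, let $\bar G\le\perm(m)$ be the induced action on blocks and $K$ its kernel; by induction $c(\bar G)\le m-1$. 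Since $K$ fixes every block setwise, it is intransitive of degree $n$ with at least $m$ orbits, and the peeling argument (whose recursive calls all occur at degrees $<n$, as every block has size $<n$) yields $c(K)\le n-m$. Hence $c(G)=c_G(K)+c(\bar G)\le(n-m)+(m-1)=n-1$, which is sharp here (iterated wreath products such as the Sylow $p$-subgroup of $\perm(p^k)$ attain equality).

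It remains to treat a primitive group $G$ of degree $n$, which I would organise through its socle $N=\soc(G)$. In the affine case $N\cong C_p^{\,d}$ is elementary abelian, $n=p^d$, and $G_0\cong G/N\le\gl(d,p)$ acts faithfully and irreducibly on $N$; thus $N$ is a single $G$-chief factor and $c(G)=1+c(G_0)$. The key observation is that $G_0$ acts faithfully on the $p^d-1=n-1$ nonzero vectors of $N$, hence is a permutation group of degree $n-1<n$, so by induction $c(G_0)\le(n-1)-1=n-2$ and $c(G)\le n-1$. When $N\cong T^k$ is a product of nonabelian simple groups, the socle accounts for at most two $G$-chief factors, so $c_G(N)\le 2$, while $G/N$ embeds in $\out(T)\wr\perm(k)$; using Schreier's conjecture (that $\out(T)$ is soluble of bounded chief length) together with the already-established bound $c(\perm(k))\le k-1$, one bounds $c(G/N)$ linearly in $k$. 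Since the degree of a primitive group with nonabelian socle $T^k$ grows at least like $5^{\,k}$ (and is at least $5$ when $k=1$), the inequality $c(G)\le n-1$ then holds with wide room, the small degrees $n\le 6$ being checked by hand.

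The routine parts are the intransitive and imprimitive reductions, where the additivity identity and the inequality $c_G(K)\le c(K)$ do all the work and the bookkeeping closes on the orbit count $t$. The main obstacle is the primitive case. The affine subcase, where the bound is essentially tight, is handled cleanly by passing to the faithful action on the nonzero vectors and invoking the inductive hypothesis at degree $n-1$. The genuinely delicate point is the nonabelian-socle subcase: there one must appeal to the O'Nan--Scott description of the socle and to Schreier's conjecture (hence to the classification of finite simple groups) to bound $c(G/N)$, and then verify that the degree is large enough to absorb this bound.
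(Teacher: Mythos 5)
The paper does not prove this statement at all: it is imported verbatim from Menezes's thesis \cite[Theorem 10.0.5]{nina}, so there is no in-paper argument to compare yours against; I can only assess your proof on its own terms. Your reduction machinery is correct and is the standard route: the strengthening to $c(G)\le n-t$ for a group with $t$ orbits, the additivity $c(G)=c_G(K)+c(G/K)$, the inequality $c_G(K)\le c(K)$ via refinement, the peeling of orbits, and the passage to a minimal block system all work, and you are right that the recursive calls stay at degree $<n$ (the imprimitive case invokes the intransitive case at degree $n$, which must therefore be established first within the inductive step, as you implicitly do). The affine primitive case, via the faithful action of $G_0$ on the $n-1$ nonzero vectors, is clean and handles exactly the situation where the bound is attained.

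The one genuine gap is in the nonabelian-socle case, and it is not only that the estimates are left unverified. The parenthetical claim that Schreier's conjecture gives ``$\out(T)$ soluble of \emph{bounded} chief length'' is false: $\out(\psl(2,2^f))\cong C_f$ has chief length equal to the number of prime divisors of $f$ counted with multiplicity, which is unbounded over all $T$. Relatedly, chief length is not monotone under passage to subgroups, so one cannot bound $c(G/N)$ by the chief length of $\out(T)\wr\perm(k)$; the correct move is to bound the kernel of $G/N\to\perm(k)$ by $c(U)\le\log_2|U|\le k\log_2|\out(T)|$ and the image by the inductive hypothesis at degree $k<n$. The resulting bound $c(G)\le 2+k\log_2|\out(T)|+(k-1)$ is \emph{not} linear in $k$ alone (it depends on $T$), and closing the argument requires the CFSG-dependent comparison of $\log_2|\out(T)|$ with the minimal degree of a primitive action with socle $T^k$ (at least $5^k$, and at least $m^k$ where $m$ is the minimal degree for $T$). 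That comparison does succeed, but it is precisely the content of the case and none of it is carried out; as written this case is a plausible programme rather than a proof.
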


\begin{lemma}\label{basso}
	If $G\leq \perm(n)$ and $n\geq 8$, then $\beta(G)\leq \lfloor n/2 \rfloor -3$.
\end{lemma}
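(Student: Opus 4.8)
The plan is to prove the statement by induction on $n$, establishing the slightly more flexible bound $\beta(G)\le f(n)$ for every $G\le\perm(n)$, where $f(n)=\lfloor n/2\rfloor-3$ for $n\ge 10$, $f(n)=1$ for $5\le n\le 9$, and $f(n)=0$ for $n\le 4$; since $f(n)=\lfloor n/2\rfloor-3$ already for $n\ge 8$, this yields the lemma. A quick bound comes from Lemma \ref{stime}(3) together with Theorem \ref{kopr}: a Sylow $2$-subgroup of $G$ is a $2$-subgroup of $\perm(n)$, so $\beta(G)\le d_2(G)-1\le\lfloor n/2\rfloor-1$. The point is to gain the two extra units, and I would do this by an induction on the structure of $G$ rather than by sharpening the estimate of $d_2$ (which cannot work, as $A_5\times C_2^m$ shows $d_2(G)=\lfloor n/2\rfloor$ is possible with $\beta(G)=1$). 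The engine of the induction is the additivity of $\beta$ along normal subgroups: if $N\trianglelefteq G$ then $\beta(G)=\beta(G/N)+\beta_G(N)$, where $\beta_G(N)$ counts the nonabelian $G$-chief factors inside $N$, and refining a $G$-chief series of $N$ to an ordinary one gives $\beta_G(N)\le\beta(N)$.

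First I would dispose of the intransitive case. If $\Omega_1$ is a $G$-orbit of size $n_1<n$, put $G_1=G^{\Omega_1}\le\perm(n_1)$ and let $N_1$ be the pointwise stabiliser of $\Omega_1$, acting faithfully on the remaining $n-n_1$ points; then $\beta(G)\le\beta(G_1)+\beta(N_1)$ with both factors of smaller degree, and induction together with the superadditivity $f(n_1)+f(n-n_1)\le f(n)$ finishes this case. Next, for transitive but imprimitive $G$ with a system of $c$ blocks of size $b$ (so $bc=n$, $2\le b,c<n$), let $N$ be the kernel of the action on the blocks. Then $G/N\le\perm(c)$, and since $G$ permutes the $c$ block-coordinates transitively, every nonabelian $G$-chief factor inside $N$ projects onto at least one nonabelian chief factor of the block-constituent $H_0:=N^{B_1}\le\perm(b)$, and distinct such factors lie in distinct layers; hence $\beta_G(N)\le\beta(H_0)$. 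Therefore $\beta(G)\le\beta(G/N)+\beta(H_0)\le f(c)+f(b)$, and the elementary inequality $f(b)+f(c)\le f(bc)$ (valid with room to spare for $b,c\ge 2$, the constant $-3$ being exactly what creates the slack) completes the reduction.

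This leaves the primitive case, which I expect to be the main obstacle, as no further combinatorial reduction is available and the inductive measure no longer drops. Here I would read off $\beta$ from the O'Nan--Scott description of $\soc(G)=T^k$: the socle contributes at most two nonabelian chief factors, while $G/\soc(G)$ embeds in $\out(T)\wr\perm(k)$ in the almost simple, diagonal and product cases (so that $\beta(G)\le 2+\beta(\perm(k))$, using that $\out(T)$ is soluble), and $\beta(G)=\beta(G_0)$ with $G_0\le\gl(d,p)$ and $n=p^d$ in the affine case. In every case the primitive degree is exponentially large in the relevant parameter ($n\ge 5^k$ in product action, $n\ge 60^{k-1}$ for diagonal type, $n=p^d\ge 2^d$ with $\beta(G_0)$ bounded linearly in $d$ for affine type), so the resulting $\beta(G)$ lies comfortably below $f(n)$; the only delicate points are the genuinely small primitive degrees, where one checks directly — or from the lists of primitive groups of small degree — that two nonabelian composition factors cannot occur, i.e. $\beta(G)\le 1$. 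Assembling the three cases closes the induction, and reading $f(n)=\lfloor n/2\rfloor-3$ for $n\ge 8$ proves the lemma.
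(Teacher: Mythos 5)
Your strategy (induction on the degree via the orbit/block/primitive trichotomy) is genuinely different from the paper's, which never decomposes the action at all: the paper passes to $G/R(G)$ using Holt's theorem on representing quotients, writes $\soc(G)=S_1\times\cdots\times S_r$ with $n\geq 5r$ by Easdown--Praeger, and gets $\beta(G)\leq r+\beta(G/K)\leq 2r-1$ (with $\beta(G/K)=0$ for $r\leq 4$) from the chief-length bound for $\perm(r)$ — three lines, no O'Nan--Scott. Your route could work and would be self-contained modulo the classification of primitive types, but as written it has two concrete gaps. First, the imprimitive step rests on the inequality $\beta_G(N)\leq\beta(H_0)$ for the block kernel $N$, which you justify only by the phrase "projects onto at least one nonabelian chief factor... in distinct layers". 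This is a real lemma, not an observation: one must show that a nonabelian $G$-chief factor $V/W$ of $N$ cannot collapse under the projection $\pi_1$ onto a single block constituent. That requires both the transitivity of $G$ on the blocks (to pass from $\pi_1(V)=\pi_1(W)$ to $\pi_i(V)=\pi_i(W)$ for all $i$) and the perfectness of $V/W$ (setting $L_i=V\cap\ker\pi_i$, one has $V=WL_i$ for each $i$ and $\bigcap_iL_i=1$, whence $V/W=(V/W)'\leq\overline{L_1\cap\cdots\cap L_c}=\bar 1$, a contradiction). Without this argument the central reduction of your induction is unsupported.

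Second, the primitive case is only sketched, and one of the bounds you state is not strong enough. In the affine case you propose "$\beta(G_0)$ bounded linearly in $d$" for $G_0\leq\GL(d,p)$ against $f(p^d)=\lfloor p^d/2\rfloor-3$; for $p=2$, $d=3$ this target is $f(8)=1$, so a bound that is merely linear in $d$ does not close the case and you are forced back to "checking lists", which you do not carry out. (A cleaner fix is available inside your own framework: $G_0$ acts faithfully on the $p^d-1$ nonzero vectors, so the induction hypothesis gives $\beta(G_0)\leq f(p^d-1)\leq f(n)$ directly.) Similarly, the base values $f(n)=1$ for $5\leq n\leq 9$ and $f(n)=0$ for $n\leq 4$ are definitions in your write-up, not verified facts; they are true, but proving them for degree $8$ and $9$ already requires either the primitive-group lists or exactly the socle-counting argument ($n\geq 5r$) that constitutes the paper's entire proof. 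So while the architecture is sound and the superadditivity of $f$ checks out, the proposal is a plan with two load-bearing steps left unproved, and the paper's argument reaches the same conclusion with far less machinery.
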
	
\begin{proof}
	Let $R(G)$ be the soluble radical of $G.$ By \cite[Theorem 2]{holt} $G/R(G)$ has a faithful permutation representation of degree at most $n,$ so we may assume $R(G)=1$. In particular $\soc(G)=S_1\times \cdots \times S_r$ where $S_1,\dots,S_r$ are nonabelian simple groups and, by \cite[Theorem 3.1]{ep}, $n\geq 5r.$ Let $K=N_G(S_1)\cap \dots \cap N_G(S_r).$ We have that $K/\soc(G)$ is soluble and that $G/K\leq \perm(r),$ so by Theorem \ref{nin}, $\beta(G/K)\leq r-1$ (and indeed $\beta(G/K)=0$ if $r\leq 4).$ But then $\beta(G)\leq 2r-1\leq 2\lfloor n/5 \rfloor -1$ if $r\geq 5,$ $\beta(G)\leq r \leq \lfloor n/5 \rfloor$ otherwise. 
\end{proof}	

\begin{lemma}\label{notsol}
	Suppose that $G\leq \perm(n)$ with $n\geq 8$. If $G$ is not soluble, then $$e(G)\leq \lfloor n/2 \rfloor +1.533823.$$
\end{lemma}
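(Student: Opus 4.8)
The plan is to apply Lemma \ref{e123} with $t=m+1$, where $m=\lfloor n/2\rfloor$, and to show that the resulting overhead $\mu^*(G,m+1)+\mu_2(G,m+1)+\sum_{p>2}\mu_p(G,m+1)$ stays below $0.533823$. First I would record the two permutation-theoretic inputs. By Theorem \ref{kopr} a Sylow $p$-subgroup of $G\le\perm(n)$ is $\lfloor n/p\rfloor$-generated, so $d_2(G)\le m$ and $d_p(G)\le\lfloor n/p\rfloor\le\lfloor n/3\rfloor$ for every odd $p$; and since $G$ is not soluble, $\beta:=\beta(G)\ge 1$, while Lemma \ref{basso} gives $\beta\le m-3$. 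Hence $t=m+1\ge\beta+3$, so Lemma \ref{mustar} applies, and for each odd $p$ one has $\alpha_p(G)\le\lfloor n/3\rfloor\le m-1=t-2$, so Lemma \ref{mup}(2) applies as well.

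The dominant term is $\mu_2$. From $\alpha_2(G)+\beta\le d_2(G)\le m$ (Lemma \ref{stime}(2)) we get $\alpha_2(G)\le m-\beta$, the hypotheses of Lemma \ref{mup}(1) hold, and
\[
\mu_2(G,m+1)\le \frac{1}{2^{\,m-\alpha_2(G)}}\le\frac{1}{2^{\beta}}.
\]
If $\beta\ge 2$ this already yields $\mu_2\le 1/4$, and together with the small corrections below the bound $e(G)\le m+1.533823$ follows comfortably (note $\beta\ge 2$ forces $m\ge 5$). The genuinely delicate case is $\beta=1$, where the best general estimate is only $\mu_2\le 1/2$; this value is essentially attained, e.g. by $C_2^{\,m-2}\times\perm(5)\le\perm(2m+1)$, for which $\alpha_2=m-1$ and $e(G)-m\to 3/2$. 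Thus the whole difficulty is to prove that when $\beta=1$ the two correction terms cannot push the total past $1.533823$, i.e. that they contribute at most $0.033823$.

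For the corrections I would combine decay in $n$ with the permutation degree budget. Lemma \ref{mustar} bounds $\mu^*(G,m+1)$ by $\beta(\beta+1)/(8\cdot 5^{m-3})$, and Lemma \ref{mup}(2) bounds $\sum_{p>2}\mu_p(G,m+1)$ by $\sum_{p>2}p^{-(m-1-\alpha_p)}(p-1)^{-2}$ with $\alpha_p\le\lfloor n/p\rfloor$; since each exponent $m-1-\lfloor n/p\rfloor\to\infty$ while $\sum_{p>2}(p-1)^{-2}$ converges (the estimate of \cite{hc} used in Proposition \ref{mainn} is exactly what is needed), both corrections tend to $0$. The subtle point is that the per-prime bounds $\alpha_p\le\lfloor n/p\rfloor$ cannot all be attained at once: realizing many chief factors of distinct primes, or $\alpha_2(G)=m-1$ together with a nonabelian factor, consumes disjoint points, and each nonabelian composition factor already costs at least five points by \cite[Theorem 3.1]{ep} (the accounting behind Lemma \ref{basso}). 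Exploiting this joint degree constraint forces the non-$\mu_2$ part of the budget to be small; for even $n$ it even excludes $\alpha_2=m-1$, improving $\mu_2$ to $1/4$. As in Section \ref{main}, the extremal configuration fills the degree with order-$2$ chief factors plus one minimal non-soluble factor, and evaluating the corresponding supremum with \textbf{PARI/GP} \cite{PARI2} produces the explicit constant $1.533823$.

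The main obstacle is precisely this $\beta=1$ regime: here the leading estimate $\mu_2\le 1/2$ is near-tight, so $1.533823$ is an actual supremum rather than a comfortable over-estimate, and it is attained only in the limit on the family above. Making this rigorous requires the joint control of degree against $\alpha_2(G)$ and the various $\alpha_p(G)$ sketched here, together with a finite, computer-assisted verification (in the spirit of the computations of $\kappa$ and $\tilde\kappa$) that no non-soluble group of the finitely many exceptional small degrees — where the crude $\mu$-estimates are too lossy — exceeds the claimed bound. Once the degree budget is in place, bounding $\mu^*$ and $\sum_{p>2}\mu_p$ is routine from the estimates already established in Section \ref{main}.
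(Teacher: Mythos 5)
Your proposal assembles the right toolkit (Lemmas \ref{e123}, \ref{kopr}, \ref{basso}, \ref{stime}, \ref{mustar}, \ref{mup}) but it does not close, and the gap sits exactly where you locate the difficulty. Taking $t=m+1$ leaves a budget of $0.533823$ for $\mu^*(G,m+1)+\sum_p\mu_p(G,m+1)$; in the generic case $\beta(G)=1$ your bound $\mu_2(G,m+1)\le 1/2$ leaves only $0.033823$ for everything else, and the lemmas do not deliver that: already for $m=4$ Lemma \ref{mustar} gives only $\mu^*(G,5)\le \frac{1\cdot 2}{2\cdot 5}\cdot\frac14=\frac1{20}=0.05$, which by itself exceeds the residual budget before $\mu_3$ is even counted. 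You acknowledge this and defer to an unproved \lq\lq joint degree budget\rq\rq\ argument plus an unspecified computer verification over small degrees; none of this is carried out, so the $\beta=1$ case --- the main case --- is not proved. You also misidentify the provenance of the constant: $1.533823$ is not a supremum over an extremal family evaluated by PARI/GP (and $C_2^{m-2}\times\perm(5)$ needs degree $2m+1$, so it is not even available for even $n$); it is simply the closed form $\frac32+\sum_{k\ge 5}\frac{1}{k(k-1)^2}$.

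The paper's proof is much blunter and avoids all of this: it applies Lemma \ref{e123} with $t=m$ (not $m+1$) and accepts the crude bound $\mu_2(G,m)\le 1$, which follows from $\alpha_2(G)\le m-1$ and $\alpha_{2,u}(G)\le m-2$ for $u>1$, themselves consequences of $\beta(G)\ge1$ and parts (2), (4) of Lemma \ref{stime}. Then $\mu^*(G,m)\le 1/4$ from $1\le\beta(G)\le m-3$, $\mu_3(G,m)\le 1/4$ from $m-\alpha_3(G)\ge2$, and $\mu_p(G,m)\le (p(p-1)^2)^{-1}$ for $p\ge5$ from $m-\alpha_p(G)\ge3$, giving in total $\frac14+1+\frac14+\sum_{p\ge5}\frac1{p(p-1)^2}\le\frac32+\sum_{k\ge5}\frac1{k(k-1)^2}\le1.533823$. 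The only non-routine step is the verification that $\alpha_3(G)\le 2$ for every insoluble $G\le\perm(9)$, needed so that $m-\alpha_3(G)\ge 2$ also when $n=9$; your sketch does not touch this point. The simplest repair of your argument is to drop $t=m+1$ and redo the computation with $t=m$ as above.
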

\begin{proof} Let $m=\lfloor n/2 \rfloor.$ By Theorem $\ref{kopr},$ $d_2(G)\leq m.$ Since $G$ is not soluble, we must have $\beta(G)\geq 1$. By Lemma \ref{basso}, $\beta(G)\leq m-3$, hence, by Lemma \ref{mustar},  $\mu^*(G,m)\leq 1/4.$ By (2) and (4) of  Lemma \ref{stime}, $\alpha_2(G)\leq m-1$ and $\alpha_{2,u}(G)\leq m-2$ for every $u > 1,$ hence, by Lemma \ref{mup}, $\mu_2(G,m)\leq 1.$ If $p\geq 5,$ then, by  Theorem $\ref{kopr},$ $m-\alpha_p(G)\geq m-d_p(G)\geq m - \lfloor n/5 \rfloor \geq 3$ so, 
	by Lemma \ref{mup},	
	$\mu_p(G,m)\leq (p(p-1)^2)^{-1}.$ Since $n\geq 8$ we have $m-\alpha_3(G)\geq m - \lfloor n/3 \rfloor \geq 2$
	if $n\neq 9.$ On the other hand, it can be easily checked that $\alpha_3(G)\leq 2$ for every unsoluble subgroup $G$ of $\perm(9),$ so
	$m-\alpha_3(G)\geq 2$ also when $n=9.$ But then, again by Lemma \ref{mup},
	$\mu_3(G,m)\leq 1/4.$ It follows
	$$\begin{aligned}e(G)&\leq m + \mu^*(G,m)+\mu_2(G,m)+\mu_3(G,m)+\sum_{p>3}\mu_p(G,m)\\&\leq m+\!\frac{1}{4}\!+\!1+\!\frac{1}{4}\!+\!\sum_{p\geq 5}\frac{1}{p(p-1)^2}\leq m\!+\!\frac{3}{2}\!+\!\sum_{n\geq 5}\frac{1}{n(n-1)^2}\leq m\!+\!1.533823.\ \qedhere \end{aligned}$$
\end{proof}

\begin{lemma}
	Suppose that $G\leq \perm(n)$ with $n\geq 8$. If $G$ is soluble
	and $\alpha_{2,1}(G)<\lfloor n/2 \rfloor,$  then 
	$$e(G)\leq \lfloor n/2 \rfloor + 1.533823.$$
\end{lemma}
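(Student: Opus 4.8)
The plan is to apply Lemma~\ref{e123} with $t=m:=\lfloor n/2\rfloor$ and to estimate the contributions prime by prime. Since $G$ is soluble we have $\beta(G)=0$, so Lemma~\ref{mustar} gives $\mu^*(G,m)=0$ and Lemma~\ref{e123} reduces the statement to
$$e(G)\le m+\mu_2(G,m)+\sum_{p>2}\mu_p(G,m)\le m+1.533823.$$
Throughout I will freely use that $d_p(G)\le\lfloor n/p\rfloor$ for every prime $p$ (Theorem~\ref{kopr}) and that $\alpha_p(G)\le d_p(G)$ (Lemma~\ref{stime}(1)).

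The odd primes are controlled exactly as in Lemma~\ref{notsol}. For $p\ge5$ one has $\alpha_p(G)\le\lfloor n/p\rfloor\le\lfloor n/5\rfloor\le m-3$ for all $n\ge8$, so Lemma~\ref{mup}(2) yields $\mu_p(G,m)\le\frac{1}{p(p-1)^2}$. For $p=3$ the inequality $\lfloor n/3\rfloor\le m-2$ holds for every $n\ge8$ except $n=9$; and when $n=9$ a Sylow $3$-subgroup of $\perm(9)$ is $C_3\wr C_3$, which is $2$-generated, so $\alpha_3(G)\le d_3(G)\le2=m-2$ there as well. Hence $\alpha_3(G)\le m-2$ and $\mu_3(G,m)\le\frac14$ in every case.

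The heart of the proof is $\mu_2(G,m)\le1$, and this is where the hypothesis $\alpha_{2,1}(G)<m$ is indispensable. To apply Lemma~\ref{mup}(1) I must check that $\alpha_2(G)\le m-1$ and that $\alpha_{2,u}(G)\le m-2$ for every $u>1$. For the former, if $G$ has no complemented chief factor of order a power of $2$ larger than $2$, then $\alpha_2(G)=\alpha_{2,1}(G)\le m-1$ by hypothesis; otherwise $G$ has a complemented $2$-chief factor of order at least $4$, which can be placed at the top of a chain of complemented $2$-chief factors, and arguing as in the proof of Lemma~\ref{stime}(4) (note that $\beta(G)=0$, so every complemented chief factor of even order has $2$-power order) gives $\alpha_2(G)\le d_2(G)-1\le m-1$. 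For the latter, the bound $d_2(G)\le m$ alone is insufficient, since $\alpha_{2,2}(\perm(4))=1=\lfloor4/2\rfloor-1$; instead one uses that the complemented chief factors of order $2^u$ lie, through their crowns, inside a normal elementary abelian $2$-subgroup of $G$ of rank at least $u\,\alpha_{2,u}(G)$, which by Theorem~\ref{kopr} has rank at most $\lfloor n/2\rfloor$. Thus $u\,\alpha_{2,u}(G)\le m$, whence $\alpha_{2,u}(G)\le m/2\le m-2$ because $n\ge8$ forces $m\ge4$. With both conditions in hand, Lemma~\ref{mup}(1) gives $\mu_2(G,m)\le2^{-(m-\alpha_2(G)-1)}\le1$, and summing,
$$e(G)\le m+1+\frac14+\sum_{p\ge5}\frac{1}{p(p-1)^2}\le m+\frac54+\sum_{j\ge5}\frac{1}{j(j-1)^2}<m+1.533823.$$

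The main obstacle is precisely the verification of the two conditions on the $2$-chief factors. In the non-soluble case of Lemma~\ref{notsol} one has $\beta(G)\ge1$, which forces $\alpha_2(G)\le d_2(G)-1$ directly through Lemma~\ref{stime}(2); here $\beta(G)=0$, so both $\alpha_2(G)\le m-1$ and $\alpha_{2,u}(G)\le m-2$ must be extracted from the hypothesis $\alpha_{2,1}(G)<m$ together with a degree estimate for the crowns of the even-order complemented chief factors, the latter being the step that genuinely uses $G\le\perm(n)$ rather than merely the generation number of its Sylow subgroups.
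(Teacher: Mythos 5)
There is a genuine gap, and it sits exactly at the point you yourself identify as the heart of the proof: the claim that $\alpha_2(G)\le m-1$ (with $m=\lfloor n/2\rfloor$), needed to apply Lemma \ref{mup}(1) with $t=m$ and get $\mu_2(G,m)\le 1$. Your proposed adaptation of the proof of Lemma \ref{stime}(4) — ``place a complemented $2$-chief factor of order at least $4$ at the top of the chain and conclude $\alpha_2(G)\le d_2(G)-1$'' — is false. Take $G=\perm(4)$: its only chief series is $1<V_4<\alt(4)<\perm(4)$, the factors $V_4$ and $\perm(4)/\alt(4)$ are both complemented, so $\alpha_{2,1}(G)=\alpha_{2,2}(G)=1$ and $\alpha_2(G)=2=d_2(G)$, even though there is a complemented $2$-chief factor of order $4$. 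The chain argument breaks because the order-$4$ factor cannot be moved above the order-$2$ factor (the normal subgroup lattice forbids it), and the step $d_2(G)=d_2(Y_1)$ requires $G/Y_1$ to have odd order, i.e.\ \emph{all} even-order complemented factors must lie below $Y_1$. Consequently $G=\perm(4)\times\perm(4)\le\perm(8)$ is a soluble group satisfying the hypotheses of the lemma ($\alpha_{2,1}(G)=2<4=m$) with $\alpha_2(G)=4=m$, so Lemma \ref{mup}(1) with $t=m$ is simply not applicable and your bound $\mu_2(G,m)\le 1$ does not follow. The paper avoids this by splitting $\mu_2=\mu_{2,1}+\mu_{2,2}$ (index-$2$ maximal subgroups versus index-$2^u$, $u\ge 2$) and bounding the two pieces separately by $2^{\alpha-m+1}$ and $2^{\alpha^*-m+1}$, where $\alpha=\alpha_{2,1}(G)\le m-1$ by hypothesis and $\alpha^*=\sum_{u>1}\alpha_{2,u}(G)\le m-1$ by Lemma \ref{stime}(4); since $\alpha+\alpha^*\le m$, the worst case gives $\mu_2(G,m)\le 1+\tfrac14=\tfrac54$, and the budget still closes because $\tfrac54+\tfrac14+\sum_{p\ge5}\frac{1}{p(p-1)^2}\le 1.533823$. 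A separate case ($\alpha_{2,u}(G)=m-1$ for some $u\ge2$) is then handled with $t=m+1$.

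Two smaller problems. First, your bound $\alpha_{2,u}(G)\le m/2$ rests on the assertion that the crowns of the order-$2^u$ complemented factors assemble into a normal elementary abelian $2$-subgroup of rank $u\,\alpha_{2,u}(G)$; crowns are sections, not subgroups, and distinct isomorphism types of order $2^u$ give distinct crowns, so this needs an actual argument (the paper instead only ever uses $\alpha^*\le m-1$ and treats the extremal case $\alpha_{2,u}(G)=m-1$ separately). Second, for $n=9$ you claim $d_3(G)\le 2$ because the Sylow $3$-subgroup of $\perm(9)$ is $2$-generated; but $d_3(G)$ refers to a Sylow $3$-subgroup of $G$, which is merely a subgroup of $C_3\wr C_3$ and can be $C_3^3$ (e.g.\ $G=C_3^3\le\perm(9)$ is soluble with $\alpha_3(G)=3=\lfloor 9/3\rfloor$). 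Theorem \ref{kopr} only gives $d_3(G)\le 3$, so $m-\alpha_3(G)\ge 2$ can fail and $\mu_3(G,m)\le\tfrac14$ does not follow; the paper disposes of the subcase $n=9$, $\alpha_3(G)=3$ by a direct count of maximal subgroups inside $\perm(3)\wr\perm(3)$.
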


\begin{proof}
	Let $\alpha=\alpha_{2,1}(G)$, $\alpha^*=\sum_{i>1}\alpha_{2,i}(G)$ and $m=\lfloor n/2 \rfloor.$ Notice that $\alpha^*\leq m-1$ by Lemma \ref{stime} (4).  Set $$\mu_{2,1}(G,t)=\sum_{k\geq t}\frac{m_{2}^A(G)}{2^{k}},\quad 
	\mu_{2,2}(G,t)=\sum_{k\geq t}\left(\sum_{n\geq 2}\frac{m_{2^n}^A(G)}{2^{nk}}\right).$$
	We distinguish two cases:	
	
	\noindent a) $\alpha_{2,u}(G)<m-1$ for every $u\geq 2.$ 
	Since ${m_{2}^A(G)}=2^\alpha-1,$ we have $$\mu_{2,1}(G,m)\leq \sum_{k\geq m}\frac{2^\alpha}{2^k}=\frac{1}{2^{m-\alpha-1}}\leq 1.$$ Moreover, arguing as in the proof of \cite[Lemma 7]{alex}, we deduce $$\mu_{2,2}(G,m)\leq \frac{1}{2^{m-\alpha^*-1}}\leq 1.$$ Notice that if $\alpha=m-1,$ then $\alpha^*\leq 1$ and consequently  $\mu_{2,2}(G,m)\leq 2^{2-m}\leq 1/4.$ Similarly, if $\alpha^*=m-1,$ then $\alpha\leq 1$ and   $\mu_{2,1}(G,m)\leq 2^{2-m}\leq 1/4.$ If follows that $\mu_2(G,m)=\mu_{2,1}(G,m)+\mu_{2,2}(G,m)\leq 5/4.$ Except in the case when $n=9$ and $\alpha_3(G)=3$, arguing as in the end of Lemma \ref{notsol}, we conclude
	$$\begin{aligned}e(G)&\leq m +\mu_2(G,m)+\mu_3(G,m)+\sum_{p>3}\mu_p(G,m)\\&\leq m+\frac{5}{4}+\frac{1}{4}+\sum_{p\geq 5}\frac{1}{p(p-1)^2}\leq m+1.533823.\quad  \end{aligned}$$
	We remain with the case when $G$ is a soluble subgroup of $\perm(9)$ with $\alpha_3(G)=3.$ This occurs only if $G$ is contained in the wreath product 
	$\perm(3)\wr \perm(3).$ In particular $\alpha_2(G)\leq 3.$ If $\alpha_2(G)\leq 2,$ then, by Lemma 	\ref{mup},
	$$e(G)\leq 5+\mu_2(G,5)+\mu_3(G,5)\leq 5+\frac{1}{4}+\frac{1}{4} = 5.5.$$
	We have $\alpha_2(G)=\alpha_3(G)=3$ only in two cases: $\perm(3)\times \perm(3) \times \perm 3$, $\langle (1,2,3), (4,5,6), (1,4)(2,5)(3,6), (1,2)(4,5)\rangle \times \perm(3).$ In this two cases, $G$ contains exactly 16 maximal subgroups, 7 with index 2 and 9 of index 3. But them
	$$e(G)\leq 4+\sum_{k\geq 4}\frac{m_2(G)}{2^k}+\sum_{k\geq 4}\frac{m_3(G)}{3^k}= 4+\sum_{k\geq 4}\frac{7}{2^k}+\sum_{k\geq 4}\frac{9}{3^k}=4+\frac{7}{8}+\frac{1}{6}\sim 5.0417. 
	$$

	\noindent b) $\alpha_{2,u}(G)=m-1$ for some $u\geq 2.$ In this case
	$m_2^A(G)\leq 1,$ so $$\mu_{2,1}(G,m+1)\leq \sum_{k\geq m+1}\frac{1}{2^{k}}=\frac{1}{2^m}\leq \frac{1}{16}.$$
	Moreover, by \cite[Lemma 5]{alex}, $m_{2^u}^A(G)\leq 2^{u\alpha_{2,t}(G)+u}$, hence
	$$\begin{aligned}\mu_{2,2}(G,m+1)&=\sum_{k\geq m+1}\left(\sum_{n\geq 2}\frac{m_{2^n}^A(G)}{2^{nk}}\right)=\sum_{k\geq m+1}\frac{m_{2^u}^A(G)}{2^{uk}}\leq \sum_{k\geq m+1}\frac{2^{u\alpha_{2,t}(G)+u}}{2^{uk}}\\&\leq \sum_{k\geq m+1}\frac{2^{um}}{2^{uk}}=\frac{1}{2^u-1}\leq \frac 1{3}.
	\end{aligned}$$
	If $p\geq 5,$ then $m-\alpha_p(G)\geq 3$  so, 
	by Lemma \ref{mup},	
	$\mu_p(G,m+1)\leq (p(p-1))^{-2}.$ Moreover $m-\alpha_3(G)\geq 2$ (notice that there is no subgroup of $\perm(9)$ with $\alpha_3(G)=3$ and $\alpha_{2,u}(G)=3$ for some $u\geq 2)$, so, again by Lemma \ref{mup}, $\mu_3(G,m+1)\leq 1/12.$ It follows
	$$\begin{aligned}e(G)&\leq m\! + \!1 +\mu_{2,1}(G,m+1)+\mu_{2,2}(G,m+1)+\mu_3(G,m+1)+\sum_{p>3}\mu_p(G,m+1)\\&\leq m\!+\!1\!+\!\frac{1}{16}\!+\!\frac 1{3}\!+\!\frac 1{12}\!+\!\!\sum_{p\geq 5}\frac{1}{p^2(p-1)^2}\leq \frac {71}{48}\!+\!\!\sum_{n\geq 5}\frac{1}{n^2(n-1)^2}\leq m\!+\!1.4843.\!\qedhere \end{aligned}$$
	\end{proof}

When $G\leq \perm(n)$ and $n \leq 7,$ the precise value of $e(G)$ can be computed by \textbf{GAP} \cite{gap} using  the formula $$e(G)=-\sum_{H < G}\frac{\mu_G(H)|G|}{|G|-|H|},$$ where  $\mu_G$ is the  M\"obius function defined on the
subgroup lattice  of $G$ (see \cite[Theorem 1]{mon}). The crucial information are summarized in the following lemma.

\begin{lemma}
	Suppose that $G\leq \perm(n)$ with $n\leq 7$. Either $e(G)\leq \lfloor n/2 \rfloor+1$ or one of the following cases occurs:\begin{enumerate}
		\item $G\cong \perm(3),$ $n=3$, $e(G)=29/10;$
		\item $G\cong C_2\times C_2,$ $n=4$, $e(G)=10/3;$
		\item $G\cong D_8,$ $n=4$, $e(G)=10/3;$
		\item $G\cong C_2\times \perm(3),$ $n=5$, $e(G)=1181/330;$
		\item $G\cong C_2\times C_2\times C_2,$ $n=6$, $e(G)=94/21;$
		\item $G\cong C_2\times D_8,$ $n=6$, $e(G)=94/21;$
			\item $G\cong C_2\times C_2\times \perm(3),$ $n=7$, $e(G)=241789/53130;$
			\item $G\cong D_8\times \perm(3),$ $n=7$, $e(G)=241789/53130.$
	\end{enumerate}
\end{lemma}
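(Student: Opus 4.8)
Since $e(G)$ depends only on the isomorphism type of $G$, and conjugate subgroups of $\perm(n)$ are isomorphic, it suffices to evaluate $e$ on one representative of each conjugacy class of subgroups of $\perm(n)$ for $n\in\{1,\dots,7\}$. For a fixed $G$ the value $e(G)$ is a concrete rational number: starting from $e(G)=\sum_{k\geq 0}(1-P_G(k))$ and the Möbius expansion $P_G(k)=\sum_{H\leq G}\mu_G(H)(|H|/|G|)^k$, summing the geometric series in $k$ (legitimate because $1-P_G(k)$ decays geometrically) yields the closed form $e(G)=-\sum_{H<G}\mu_G(H)|G|/(|G|-|H|)$ recorded before the statement, following \cite{mon}. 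Thus $e(G)$ is determined by the subgroup lattice of $G$ together with its Möbius function.

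The proof is then a finite computation. For each $n\leq 7$ I would have GAP \cite{gap} list representatives of the conjugacy classes of subgroups of $\perm(n)$, compute $\mu_G$ on each subgroup lattice, evaluate $e(G)$ by the closed formula using exact rational arithmetic, and test $e(G)\leq\lfloor n/2\rfloor+1$. Collecting the groups that fail the test, one reads off the eight exceptional types listed, each at its minimal faithful degree, together with the exact values $29/10,\ 10/3,\ 1181/330,\ 94/21$ and $241789/53130$.

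It is worth isolating beforehand why these particular groups are extremal, as an organizing principle and a check on the output. In each exceptional case the Sylow $2$-subgroup needs exactly $\lfloor n/2\rfloor$ generators, the largest number permitted for a $2$-subgroup of $\perm(n)$ by Theorem \ref{kopr}, and it is elementary abelian or of type $D_8$; this is precisely the configuration that maximizes $e$ in the analysis of the groups $H_{\pi,d}$ in Section \ref{main}. For the complementary groups, where $d_2(G)<\lfloor n/2\rfloor$, one expects the inequality to follow already from Lemmas \ref{e123}, \ref{mustar} and \ref{mup} applied with $m=\lfloor n/2\rfloor$, since the slack in the Sylow $2$-generator number keeps the total of the correction terms $\mu^*(G,m)$ and $\sum_p\mu_p(G,m)$ below $1$; the genuinely delicate groups are therefore confined to the short list with $d_2(G)=\lfloor n/2\rfloor$.

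The obstacle is not conceptual but one of scale and reliability: $\perm(7)$ has a large subgroup lattice, so the Möbius computations must be carried out correctly and, ideally, reproduced independently. Exact rational rather than floating-point arithmetic is essential, because for several groups $e(G)$ lies very close to $\lfloor n/2\rfloor+1$, and only an exact evaluation decides on which side of the threshold it falls—and reports the tabulated values—without ambiguity.
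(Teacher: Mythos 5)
Your proposal takes essentially the same route as the paper: the paper also handles $n\leq 7$ by an exhaustive \textbf{GAP} computation over the subgroups of $\perm(n)$, evaluating $e(G)$ exactly via the M\"obius-function formula $e(G)=-\sum_{H<G}\mu_G(H)|G|/(|G|-|H|)$ from \cite{mon} and reading off the exceptional groups, offering no further argument. The structural heuristics you add about which groups should be extremal are a reasonable sanity check but play no role in the paper's (purely computational) justification.
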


\begin{thm}
	Let $G$ be a permutation group of degree $n\neq 3$. If  $\alpha_{2,1}(G)=\lfloor n/2 \rfloor$, then
	$e(G)\leq \lfloor n/2 \rfloor+\nu,$ with $\nu\sim 1.606695.$
\end{thm}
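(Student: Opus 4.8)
The plan is to use the hypothesis to force $G$ into an extremely rigid shape, reduce to its transitive constituents, and then fall back on the same $\prod_l(1-2^{-l})$ computation carried out in Section \ref{main}. Throughout write $m=\lfloor n/2\rfloor$.

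First I would extract the rigidity. By Theorem \ref{kopr} we have $d_2(G)\le m$, so Lemma \ref{stime}(1),(2) gives
$$m=\alpha_{2,1}(G)\le\alpha_2(G)\le\alpha_2(G)+\beta(G)\le d_2(G)\le m,$$
whence every inequality is an equality: $\alpha_{2,1}(G)=\alpha_2(G)=d_2(G)=m$, $\alpha_{2,u}(G)=0$ for all $u>1$, and $\beta(G)=0$. In particular $G$ is soluble. Since the only complemented $2$-chief factors are now central of order $2$, the set $\mathcal A_2$ consists of the trivial module alone with $\delta=m$, so (\ref{gzpr}) gives $P_{G,2}(k)=C_{2,m}(k)$ \emph{exactly}; for the odd primes I keep only $P_{G,p}(k)\ge D_{p,d_p(G)}(k)$ from Lemma \ref{confronti}(1).

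Next I would cut the problem down to the transitive constituents. Let $\Omega_1,\dots,\Omega_r$ be the $G$-orbits, of lengths $n_i$, and $\bar G_i=G^{\Omega_i}$. Using that $\alpha_{2,1}$ — being the $2$-rank of the abelianization — is subadditive along the subdirect embedding $G\hookrightarrow\prod_i\bar G_i$, together with $\alpha_{2,1}(\bar G_i)\le\lfloor n_i/2\rfloor$, I obtain
$$m=\alpha_{2,1}(G)\le\sum_i\alpha_{2,1}(\bar G_i)\le\sum_i\lfloor n_i/2\rfloor\le\lfloor n/2\rfloor=m.$$
Equality forces at most one orbit of odd length, and $\alpha_{2,1}(\bar G_i)=d_2(\bar G_i)=\alpha_2(\bar G_i)=\lfloor n_i/2\rfloor$ with $\alpha_{2,u}(\bar G_i)=0\ (u>1)$ for every $i$; that is, each constituent inherits the full rigidity. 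The heart of the proof — and the step I expect to be the main obstacle — is to classify these constituents: I would show that a soluble transitive $T\le\perm(N)$ with $\alpha_{2,1}(T)=d_2(T)=\lfloor N/2\rfloor$ and $\alpha_{2,u}(T)=0\ (u>1)$ is either a $2$-group or equals $S_3$, so that such a $T$ has a chief factor of odd order only when $T=S_3$ (with $N=3$). The mechanism is that $\alpha_{2,1}(T)=\lfloor N/2\rfloor$ forces an elementary abelian $2$-quotient of the maximal possible rank; for odd $N$ this quotient admits no nontrivial $2$-power block system while a Sylow $2$-subgroup fixes a point and lives on $N-1$ letters, and a count using Theorem \ref{nin} to bound the chief length rules out $N\ge5$ (this is exactly what excludes $A_4$, where $d_2=2=\lfloor4/2\rfloor$ but $\alpha_{2,1}=0$). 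Hence every even orbit contributes a $2$-group and the unique possible odd orbit is an $S_3$, so that $3$ divides $|G|$ to the first power at most: $d_3(G)\le1$ and $d_p(G)=0$ for all $p\ge5$.

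Finally I would run the computation. With the odd part reduced to at most one copy of $C_3$, Lemma \ref{confronti}(1) gives $P_G(k)\ge C_{2,m}(k)\,(1-3^{1-k})^{\varepsilon}$ with $\varepsilon\in\{0,1\}$, and since $C_{2,m}(k)=0$ for $k<m$,
$$e(G)\le m+\sum_{k\ge m}\Big(1-C_{2,m}(k)\,(1-3^{1-k})^{\varepsilon}\Big).$$
Set $\nu=\lim_{m\to\infty}\sum_{k\ge m}\big(1-C_{2,m}(k)\big)=\sum_{j\ge0}\Big(1-\prod_{l\ge j+1}(1-2^{-l})\Big)\sim1.606695$, the common limit realised by the family $\perm(2)^m$, $\perm(2)^{m-1}\times\perm(3)$. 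Exactly as in Section \ref{main} one checks that $\sum_{k\ge m}(1-C_{2,m}(k))$ increases to $\nu$, while the extra term $\sum_{k\ge m}C_{2,m}(k)\,3^{1-k}=O(3^{-m})$ stays below the deficit $\nu-\sum_{k\ge m}(1-C_{2,m}(k))=\Theta(2^{-m})$ for every $m\ge4$; the finitely many remaining cases (the groups on $n\le7$ letters, already tabulated and all satisfying $e(G)-\lfloor n/2\rfloor<\nu$) are read off directly, and $\perm(3)$ is excluded by hypothesis. This yields $e(G)\le m+\nu$, completing the proof.
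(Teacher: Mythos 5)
Your overall architecture differs from the paper's at exactly one point, and that point is where the gap lies. The paper disposes of the structure of $G$ in one line: the hypothesis $\alpha_{2,1}(G)=\lfloor n/2\rfloor$ means $C_2^{\lfloor n/2\rfloor}$ is an epimorphic image of $G$, and the \emph{main theorem} of Kov\'acs--Praeger \cite{kp} (not just the corollary quoted as Theorem \ref{kopr}) says precisely that such a $G$ is the direct product of its transitive constituents, each constituent being $\perm(2)$, $\perm(3)$, $C_2\times C_2$, $D_8$ or $D_8\circ D_8$; everything after that is the $\prod_l(1-2^{-l})$ computation you also perform. You instead propose to re-derive this structure from scratch, and the two steps you need --- (i) the reduction to transitive constituents via subadditivity of $\alpha_{2,1}$, and (ii) the classification of a transitive soluble $T\le\perm(N)$ with $\alpha_{2,1}(T)=\lfloor N/2\rfloor$ --- together \emph{are} the Kov\'acs--Praeger theorem. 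Step (i) is true but not for the reason you give: for a subdirect product $G\le\prod_i\bar G_i$ the natural map $\bigoplus_i\hom(\bar G_i,C_2)\to\hom(G,C_2)$ need not be surjective, so "subadditivity of the $2$-rank of the abelianization" is not a formality; one has to argue via a normal series through the kernels $K_i=\ker(G\to\bar G_i)$ and the fact that complementation of a chief factor above $K_i$ is detected in $G/K_i$. That can be done, but it needs to be written.

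Step (ii) is the genuine gap. Your proposed mechanism --- "a count using Theorem \ref{nin} to bound the chief length rules out $N\ge 5$" --- does not work as stated. Theorem \ref{nin} only gives chief length at most $N-1$, and for a transitive soluble non-$2$-group of even degree, say $N=6$ or $N=8$, this bound ($5$, resp.\ $7$) comfortably accommodates the required $\lfloor N/2\rfloor$ complemented factors of order $2$ together with additional factors of odd order, so no contradiction arises from counting alone; your sketch moreover only addresses odd $N$. Ruling these cases out requires an actual analysis of how the kernel of $G\to C_2^{\lfloor N/2\rfloor}$ meets a point stabiliser and an induction over block systems --- i.e.\ the content of \cite{kp}. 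Everything else in your proposal is sound and matches the paper: the rigidity $\alpha_{2,1}(G)=\alpha_2(G)=d_2(G)=m$, $\beta(G)=0$ from Lemma \ref{stime} is correct and yields $P_{G,2}(k)=C_{2,m}(k)$ exactly; the bound $P_G(k)\geq C_{2,m}(k)(1-3^{1-k})^{\varepsilon}$ is the paper's exact formula; the monotonicity for $m\geq 4$ and the limit $\nu\sim 1.606695$ agree with the paper's $\omega_{m,\eta}$ argument; and the cases $n\leq 7$ are correctly delegated to the tabulated values. So the proof would be complete, and essentially identical to the paper's, if you replaced your structural steps by a citation of the Kov\'acs--Praeger classification; as a self-contained derivation it is not.
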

\begin{proof}
Let $m=\lfloor n/2 \rfloor.$ We have that $\alpha_{2,1}(G)=m$ if and only if $C_2^m$ is an epimorphic image of $G.$ By \cite{kp} if $C_2^m$ is an epimorphic image of $G$ then $G$ is the direct product of its transitive constituents
and each constituent is one of the following: $\perm(2),$ of degree 2, $\perm(3),$ of degree 3,
$C_2 \times C_2,$ $D_8,$ of degree 4, and the central product $D_8 \circ D_8,$ of degree 8. Consequently:
$$
G/\frat(G)\simeq \begin{cases} C_2^m &\text{ if $n=2m$},\\
C_2^{m-1}\times \perm(3) &\text{ if $n=2m+1.$}
\end{cases}
$$
And so, by (\ref{gzpr}),
\begin{equation*}
P_G(k)=P_{G/\frat(G)}(k)=\prod_{0\leq i\leq m-1}\left(1-\frac{2^i}{2^k}\right){\left(1-\frac{3}{3^k}\right)}^{n-2m}.
\end{equation*}
Setting $\eta=0$ if $n$ is even,  $\eta=1$ otherwise,  we have
\begin{equation*}
\begin{split}
e(G)&=\sum_{k\geq 0}\left(1-P_G(k)\right)\leq \sum_{k\geq 0}\left(1-\prod_{0\leq i\leq m-1}\left(1-\frac{2^i}{2^k}\right){\left(1-\frac{3}{3^k}\right)^\eta}\right)\\
&= m+ \sum_{k\geq m}\left(1-\prod_{0\leq i\leq m-1}\left(1-\frac{2^i}{2^k}\right){\left(1-\frac{3}{3^k}\right)^\eta}\right)\\
&=  m+ \sum_{j\geq 0}\left(1-\prod_{1\leq l\leq m}\left(1-\frac{1}{2^{j+l}}\right){\left(1-\frac{3}{3^{j+m}}\right)^\eta}\right).
\end{split}
\end{equation*}
Set $$\omega_{m,\eta}=\sum_{j\geq 0}\left(1-\prod_{1\leq l\leq m}\left(1-\frac{1}{2^{j+l}}\right){\left(1-\frac{3}{3^{j+m}}\right)^\eta}\right).$$
Clearly $\omega_{m,0}$ increase with $m.$ On the other hand, if $m\geq 4$ and $j\geq 0$ then
\begin{equation*}
\begin{split}
\left(1-\frac{1}{2^{j+m+1}}\right)\left(1-\frac{3}{3^{j+m+1}}\right)\leq \left(1-\frac{3}{3^{j+m}}\right)
\end{split}
\end{equation*}
and so $\omega_{m,1} \leq \omega_{m+1,1}$ if $m \geq 4.$
Moreover $$\lim_{m\to \infty}\omega_{m,1}=\lim_{m\to \infty}\omega_{m,0}\sim 1.606695.$$
But then $e(G)\leq m+1.606695$ whenever $m
\geq 4.$ The value of $e(G)$ when $n$ is small is given by the following table (that indicates also how fast $e(G)-m$ tends to 1.606695).
\begin{center}
	\begin{tabular}{|p{0.7cm}|p{4.2cm}|}
		\hline
			\rule[-2mm]{0mm}{0,7cm}
		$n$ &  $e(G)$ \\
		\hline
		\rule[-2mm]{0mm}{0,7cm}
		$2$ &  $2$\\
		\hline
		\rule[-2mm]{0mm}{0,7cm}
		$3$ &  $\frac{29}{10}=2.900$\\
		\hline
		\rule[-2mm]{0mm}{0,7cm}
		$4$ &  $\frac{10}{3}\sim 3.334$\\
		\hline
		\rule[-2mm]{0mm}{0,7cm}
		$5$ &  $\frac{1181}{330}\sim 3.579$\\
		\hline
		\rule[-2mm]{0mm}{0,7cm}
		$6$ &  $\frac{94}{21}\sim 4.476$\\
		\hline
		\rule[-2mm]{0mm}{0,7cm}
		$7$ &  $\frac{241789}{53130}\sim 4.551$\\
		\hline
		\rule[-2mm]{0mm}{0,7cm}
		$8$ &  $\frac{194}{35}\sim 5.5429$\\
		\hline
	\end{tabular}
\begin{tabular}{|p{0.7cm}|p{4.2cm}|}
	\hline
	\rule[-2mm]{0mm}{0,7cm}
	$n$ &  $e(G)$ \\
	\hline
	\rule[-2mm]{0mm}{0,7cm}
	$9$ &  $\frac{4633553}{832370}\sim 5.5667$\\
	\hline
	\rule[-2mm]{0mm}{0,7cm}
	$10$ &  $\frac{7134}{1085}\sim 6.5751$\\
	\hline
	\rule[-2mm]{0mm}{0,7cm}
	$11$ &  $\frac{3227369181}{490265930}\sim 6.5828$\\
	\hline
	\rule[-2mm]{0mm}{0,7cm}
	$12$ &  $\frac{74126}{9765}\sim 7.59099$\\
	\hline
	\rule[-2mm]{0mm}{0,7cm}
	$13$ &  $\frac{6399598043131}{842767133670}\sim 7.59355$\\
	\hline
	\rule[-2mm]{0mm}{0,7cm}
	$14$ &  $\frac{10663922}{1240155}\sim 8.59886$\\
	\hline
	\rule[-2mm]{0mm}{0,7cm}
	$15$ &  $\frac{70505670417749503}{8198607229768494}
	\sim 8.59971$\\
	\hline
\end{tabular}	
\end{center}
From the information contained in this table, we deduce that $e(G)\leq m+1.606695$, except when $G=\perm(3).$
\end{proof}

\end{document}